\numberwithin{equation}{section}
\newtheorem{theorem}[equation]{Theorem}
\newtheorem{lemma}[equation]{Lemma}
\newtheorem{corollary}[equation]{Corollary}
\theoremstyle{definition}
\newtheorem{definition}[equation]{Definition}
\theoremstyle{remark}
\newtheorem{remark}[equation]{Remark}
\newtheorem{example}[equation]{Example}
\newtheorem*{ack}{Acknowledgment}
\newcommand*{\N}{\mathbb{N}}
\newcommand*{\Z}{\mathbb{Z}}
\newcommand*{\C}{\mathbb{C}}
\newcommand*{\R}{\mathbb{R}}
\newcommand*{\T}{\mathbb{T}}
\newcommand*{\dif}{\mathrm{d}}
\newcommand*{\im}{\mathrm{im}}
\def\<{\left\langle}
\def\>{\right\rangle}
\newcommand*{\inv}{^{-1}}
\newcommand*{\E}{\mathbb{E}}
\newcommand*{\from}{\colon}
\newcommand{\aveN}{\frac{1}{N}\sum_{n=1}^N}
\newcommand{\aveFn}{\frac{1}{|\Phi_N|}\sum_{n\in \Phi_N}}
\newcommand{\m}{\mathbf{m}}
\newcommand*{\ol}[1]{\overline{#1}}
\newcommand{\veps}{\varepsilon}
\newcommand*{\Gb}[1][]{G_{\bullet #1}}
\newcommand*{\poly}[1][\Gb]{P(\Z,#1)}
\newcommand*{\HKZ}{\mathcal{Z}}
\begin{document}
\subjclass[2010]{37A45, 37A30, 28D05}
\title{Uniformity in the Wiener-Wintner theorem for nilsequences}
\author{Tanja Eisner}
\address
{Korteweg-de Vries Institute for Mathematics\\
University of Amsterdam\\
P.O.\ Box 94248\\
1090 GE Amsterdam\\
The Netherlands}
\email[T.~Eisner]{t.eisner@uva.nl}
\author{Pavel Zorin-Kranich}
\email[P.~Zorin-Kranich]{zorin-kranich@uva.nl}
\urladdr{http://staff.science.uva.nl/~pavelz/}
\keywords{Wiener-Wintner theorem, nilsequence, uniform convergence}
\begin{abstract}
We prove a uniform extension of the Wiener-Wintner theorem for nilsequences due to Host and Kra and a nilsequence extension of the topological Wiener-Wintner theorem due to Assani.
Our argument is based on (vertical) Fourier analysis and a Sobolev embedding theorem.
\end{abstract}
\date{\today}
\maketitle

\section{Introduction}\label{sec:intro}
Let $(X,\mu)$ be a probability space and let $T:X\to X$ be an invertible measure preserving transformation.
The classical Wiener-Wintner theorem \cite{MR0004098} asserts that for every $f\in L^1(X,\mu)$ there exists a subset $X'\subset X$ with full measure such that the weighted averages
\begin{equation}\label{ave-ww}
  \aveN f(T^nx) \lambda^n
\end{equation}
converge as $N\to\infty$ for every $x\in X'$ and every $\lambda$ in the unit circle $\T$.

Over the years this theorem has been improved and generalized in many directions.
For example, Lesigne \cites{MR1074316,MR1257033} proved that the weights $(\lambda^n)$ can be replaced by polynomial sequences of the form $(\lambda_1^{p_1(n)}\cdots \lambda_k^{p_k(n)})$, $\lambda_j\in \T$, $p_j\in \Z[X]$ (or, equivalently, $(e^{2\pi i p(n)})$, $p\in\R[X]$).
More recently Host and Kra \cite{MR2544760}*{Theorem 2.22} showed that this can be enlarged to the class of nilsequences.

In a different direction, Bourgain's uniform Wiener-Wintner theorem \cite{MR1037434} asserts convergence of the averages \eqref{ave-ww} to zero for $f$ orthogonal to the Kronecker factor \emph{uniformly in $\lambda$}, cf.~Assani \cite{MR1995517}.
A joint extension of this result and Lesigne's polynomial Wiener-Wintner theorem has been obtained by Frantzikinakis \cite{MR2246591}.
In the same spirit, our main result is a uniform version of the Wiener-Wintner theorem for nilsequences.

Let $G$ be a nilpotent Lie group with a cocompact lattice $\Gamma$.
The compact manifold $G/\Gamma$ together with the Haar measure on it is called a \emph{nilmanifold}.
Using the universal covering we may and will assume that the connected component of the identity $G^{o}$ is simply connected.
Let further $\Gb$ be a $\Gamma$-rational filtration of length $l$ on $G$ and $\poly$ be the group of $\Gb$-polynomials (we recall these notions in Section~\ref{sec:preliminary}).
Then for every polynomial $g\in\poly$ and $F\in C(G/\Gamma)$ we call the sequence $(F(g(n)\Gamma))_{n}$ a \emph{basic $l$-step nilsequence}.
An \emph{$l$-step nilsequence} is a uniform limit of basic $l$-step nilsequences (which are allowed to come from different nilmanifolds and filtrations).

Nilsystems (i.e.\ rotations on nilmanifolds) and nilsequences appear naturally in connection with norm convergence of multiple ergodic averages \cite{MR2150389}.
The $1$-step nilsequences are exactly the almost periodic sequences.
For examples and a complete description of $2$-step nilsequences see Host, Kra \cite{MR2445824}.
For a characterization of nilsequences of arbitrary step in terms of their local properties see \cite{MR2600993}*{Theorem 1.1}.
Although it is possible to express basic nilsequences as basic nilsequences of the same step associated to ``linear'' sequences of the form $(g^{n})_{n}$ (this is essentially due to Leibman \cite{MR2122919}, see e.g.\ Chu \cite{MR2465660}*{Prop. 2.1} or Green, Tao, Ziegler \cite{2010arXiv1009.3998G}*{Prop. C.2} in the setting of connected Lie groups),
 ``polynomial'' nilsequences, in addition to being formally more general, seem to be better suited for inductive purposes.
This has been observed recently and utilized in connection with additive number theory, see e.g.~Green, Tao, Ziegler \cite{2010arXiv1009.3998G} and Green, Tao \cite{MR2680398}.

From now on we fix a tempered F\o{}lner sequence $(\Phi_N)_{N}$ in $\Z$.
For an ergodic system $(X,\mu,T)$ we denote the Host-Kra factor of order $l$, defined in \cite{MR2150389}, by $\HKZ_{l}(X)$.
We also denote the Sobolev spaces on $G/\Gamma$ by $W^{j,p}(G/\Gamma)$.
All these notions are recalled in Section~\ref{sec:preliminary}.
Our main result, Theorem~\ref{thm:main}, has the following consequence.
\begin{theorem}[Uniform Wiener-Wintner for nilsequences]
\label{thm:uniform-convergence-to-zero}
Assume that $(X,\mu,T)$ is ergodic and let $f\in L^{1}(X)$ be such that $\E(f|\HKZ_{l}(X))=0$.
Let further $G/\Gamma$ be a nilmanifold with a $\Gamma$-rational filtration $\Gb$ on $G$ of length $l$.
Then for a.e. $x\in X$ we have
\begin{equation}
\label{eq:ave-uniform}
\lim_{N\to\infty} \sup_{g\in\poly, F\in W^{k,2^{l}}(G/\Gamma)}
\|F\|_{W^{k,2^{l}}(G/\Gamma)}\inv
\Big| \aveFn f(T^{n}x)F(g(n)\Gamma) \Big| = 0,
\end{equation}
where $k = \sum_{r=1}^{l}(d_{r}-d_{r+1})\binom{l}{r-1}$ with $d_{i}=\dim G_{i}$.

If in addition $(X,T)$ is a uniquely ergodic topological dynamical system and $f\in C(X)\cap \HKZ_l(X)^\bot$ then we have
\begin{equation}
\label{eq:ave-uniform-in-X}
\lim_{N\to\infty} \sup_{g\in\poly, F\in W^{k,2^{l}}(G/\Gamma), x\in X}
\|F\|_{W^{k,2^{l}}(G/\Gamma)}\inv
\Big| \aveFn f(T^{n}x) F(g(n)\Gamma) \Big| = 0.
\end{equation}
\end{theorem}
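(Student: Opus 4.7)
The theorem is announced as a consequence of the paper's central result Theorem~\ref{thm:main}, which in keeping with the abstract will be a quantitative Sobolev-type bound on the averages that is uniform in the polynomial parameter $g$ (and, in the topological case, in $x$).  Granting Theorem~\ref{thm:main}, our plan is to combine it with vertical Fourier analysis on $G/\Gamma$ and a Sobolev embedding, iterated on the step $l$ of the filtration $\Gb$.

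The induction is on $l$; the case $l=0$ is trivial.  Passing to the universal cover ensures $G^{o}$ is simply connected, so that the center $G_{l}\subset G^{o}$ is simply connected and abelian and $G_{l}/(G_{l}\cap\Gamma)\cong\T^{d_{l}}$.  Expand
\[
F=\sum_{\xi\in\widehat{\T^{d_{l}}}}F_{\xi},\qquad F_{\xi}(g z)=\xi(z)F_{\xi}(g)\ \text{for}\ z\in G_{l}.
\]
For $\xi=0$, $F_{0}$ descends to a function on the $(l-1)$-step nilmanifold $G/(G_{l}\Gamma)$ equipped with the filtration $\Gb$ truncated at step $l-1$.  Because $\HKZ_{l-1}(X)\subset\HKZ_{l}(X)$ the hypothesis $\E(f|\HKZ_{l}(X))=0$ transfers, and the inductive hypothesis handles this component.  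For $\xi\neq 0$ the averages against $F_{\xi}$ are exactly the situation governed by Theorem~\ref{thm:main}.  Summability in $\xi$ is then obtained from the standard vertical Fourier decay $\|F_{\xi}\|\lesssim(1+|\xi|)^{-s}\|F\|_{W^{s,\cdot}}$ combined with a Sobolev embedding on $G/\Gamma$; the precise exponent $k=\sum_{r=1}^{l}(d_{r}-d_{r+1})\binom{l}{r-1}$ is exactly the amount of regularity consumed traversing the $l$-step tower, the binomial coefficient tracking how the effective polynomial degree doubles at each invocation of a van der Corput/Cauchy-Schwarz step in Theorem~\ref{thm:main}, and the differences $d_{r}-d_{r+1}$ counting the genuinely new directions at level $r$ of the filtration.

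The uniformity in $g\in\poly$ is treated in the same spirit: the $\Gb$-polynomials modulo the obvious $\Gamma$-equivalence form a finite-dimensional parameter space, and $\sup_{g}$ is bounded by a Sobolev norm on that parameter space via an embedding that is absorbed into the same exponent $k$.  The conclusion \eqref{eq:ave-uniform-in-X} follows by the same argument starting from the topological variant of Theorem~\ref{thm:main}, using that in the uniquely ergodic case the a.e.\ bound upgrades to a uniform-in-$x$ bound by a standard integration-and-equicontinuity argument.  The main obstacle is therefore not in the deduction above, which is largely bookkeeping, but in Theorem~\ref{thm:main} itself: establishing a quantitative, Sobolev-norm, uniform-in-$g$ bound for a \emph{single} nonzero vertical frequency $\xi$ requires the full Host-Kra structure theory together with a polynomial van der Corput argument that reduces nilpotency step at a controlled cost in regularity, and tracking this loss is precisely what fixes the exponent $k$.
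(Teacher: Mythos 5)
There is a genuine gap. The statement you are asked to prove is deduced in the paper from Theorem~\ref{thm:main} by a short approximation argument, and it is precisely this argument that your proposal omits. Theorem~\ref{thm:main} applies to $f\in L^{\infty}(X)$ and a fully generic point $x$, whereas Theorem~\ref{thm:uniform-convergence-to-zero} is stated for $f\in L^{1}(X)$. The missing steps are: (i) truncate $f$ to bounded functions $f_{j}\to f$ in $L^{1}$ and replace $f_{j}$ by $f_{j}-\E(f_{j}|\HKZ_{l}(X))$ so that $\|f_{j}\|_{U^{l+1}(X)}=0$ (this is where the equivalence $\E(\cdot|\HKZ_{l}(X))=0\iff\|\cdot\|_{U^{l+1}(X)}=0$ enters --- you never invoke it); (ii) apply Theorem~\ref{thm:main} to each $f_{j}$ to kill its contribution; (iii) control the error term uniformly over $g$ and $F$ by the Sobolev embedding $\|F\|_{\infty}\lesssim\|F\|_{W^{k,2^{l}}(G/\Gamma)}$, which gives
\[
\sup_{g,F}\|F\|_{W^{k,2^{l}}}\inv\Big|\aveFn (f-f_{j})(T^{n}x)F(g(n)\Gamma)\Big|\lesssim\aveFn|f-f_{j}|(T^{n}x),
\]
and then use genericity of $x$ for $|f-f_{j}|$ to bound the $\limsup$ by $\|f-f_{j}\|_{1}\to 0$. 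Without (iii) there is no way to pass from the bounded approximants to an $L^{1}$ function while keeping the supremum over $F$ and $g$.

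Beyond this, most of your text re-describes (a version of) the internal mechanism of Theorem~\ref{thm:main} --- vertical Fourier decomposition, van der Corput, step-reduction --- which you have already granted, so it does no work toward the deduction. Two of these re-descriptions are also inaccurate: the paper handles the zero vertical frequency by the same vertical-character/cube-construction argument as the nonzero ones (the step reduction goes through $G_{1}^{\square}/\Gamma^{\square}$ modulo $G_{l}^{\square}$, not through $G/(G_{l}\Gamma)$), and the uniformity in $g\in\poly$ does not come from any ``Sobolev norm on a finite-dimensional parameter space of polynomials'' --- it comes from the constant and the threshold $N_{0}$ in Theorem~\ref{thm:main} being independent of $g$ and $F$ by construction. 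The uniquely ergodic case you describe correctly: it follows directly from the topological half of Theorem~\ref{thm:main}, since there $f$ is already continuous and bounded so no truncation is needed.
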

In view of a counterexample in Section~\ref{sec:counterexample} the Sobolev norm cannot be replaced by the $L^{\infty}$ norm.
On the other hand, we have not investigated whether the above order $k$ is optimal and believe that it is not.

The conclusion \eqref{eq:ave-uniform} differs from the uniform polynomial Wiener-Wintner theorem of Frantzikinakis \cite{MR2246591} in several aspects.
First, our class of weights is considerably more general, comprising all nilsequences rather than polynomial phases (a polynomial phase $f(p(n)\Z)$, $f\in C(\R/\Z)$, $p\in\R[X]$ is also a nilsequence of step $\deg p$ with the filtration $\R = \dots = \R \geq \{0\}$ of length $\deg p$ and cocompact lattice $\Z$).
Also, our result does not require total ergodicity, an assumption that cannot be omitted in the result of Frantzikinakis.
The price for these improvements is that we have to assume the function to be orthogonal to the Host-Kra factor and not only to the Abramov factor of order $l$ (i.e.\ the factor generated by the generalized eigenfunctions of order $\leq l$).

The conclusion \eqref{eq:ave-uniform-in-X} generalizes a result of Assani \cite{MR1995517}*{Theorem 2.10}, which corresponds essentially to the case $l=1$.
Note that without the orthogonality assumption on the function, everywhere convergence can fail even for averages \eqref{ave-ww} for some $\lambda\in \T$.
For more information on this phenomenon we refer to Robinson \cite{MR1271545}, Assani \cite{MR1995517} and Lenz \cite{MR2480747}.

Let $\Gb$ be a $\Gamma$-rational filtration on $G$ and $g\in\poly$ be a polynomial sequence.
By Leibman \cite{MR2122919}*{Theorem B} the sequence $g(n)\Gamma$ is contained and equidistributed in a finite union $\tilde Y$ of sub-nilmanifolds of $G/\Gamma$.
For a Riemann integrable function $F:\tilde Y \to \C$ we call the bounded sequence $(F(g(n)\Gamma))_{n}$ a \emph{basic generalized $l$-step nilsequence} (one obtains the same notion upon replacing the polynomial $g(n)$ by a ``linear'' polynomial $(g^{n})_{n}$).
A \emph{generalized $l$-step nilsequence} is a uniform limit of basic generalized $l$-step nilsequences.

A concrete example of a generalized nilsequence is $(e^{i[n\alpha]n\beta})$ for $\alpha, \beta\in \R$ or, more generally, bounded sequences of the form $(p(n))$ and $(e^{ip(n)})$ for a generalized polynomial $p$, i.e., a function obtained from conventional polynomials using addition, multiplication, and taking the integer part, see Bergelson, Leibman \cite{MR2318563}.

We also obtain an extension of the Wiener-Wintner theorem for nilsequences due to Host and Kra \cite{MR2544760}*{Cor. 2.23 and its proof} to non-ergodic systems.
\begin{theorem}[Wiener-Wintner for generalized nilsequences]
\label{thm:WW-gen-nilseq}
For every $f\in L^1(X,\mu)$ there exists a set $X'\subset X$ of full measure such that for every $x\in X'$ the averages
\begin{equation}\label{ave}
\aveFn a_n f(T^n x)
\end{equation}
converge for every generalized nilsequence $(a_n)$.

If in addition $(X,T)$ is a uniquely ergodic topological dynamical system, $f\in C(X)$ and the projection $\pi:X\to\HKZ_l(X)$ is continuous for some $l$ then the averages \eqref{ave} converge for \emph{every} $x\in X$ and every $l$-step generalized nilsequence $(a_n)$.
\end{theorem}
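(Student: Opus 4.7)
The plan is to reduce Theorem~\ref{thm:WW-gen-nilseq} to the uniform convergence statement of Theorem~\ref{thm:uniform-convergence-to-zero} together with Leibman's equidistribution theorem for polynomial orbits on nilmanifolds.

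\textbf{Reductions.} Since a generalized nilsequence is a uniform limit of basic generalized nilsequences and the map $(a_{n})\mapsto \aveFn a_{n}f(T^{n}x)$ is bounded uniformly in $N$ by $\|f\|_{L^{1}}$ on $L^{\infty}$-weights, a $3\veps$ argument reduces the assertion to basic generalized $l$-step nilsequences $a_{n}=F(g(n)\Gamma)$ for each fixed $l$. A further approximation in $L^{1}(G/\Gamma)$ by smooth functions lets us assume $F\in W^{k,2^{l}}(G/\Gamma)$. Using an ergodic decomposition of $\mu$, it suffices to treat the ergodic case; by density we may also assume $f\in L^{\infty}(X)$.

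\textbf{Splitting and the orthogonal part.} Write $f=f_{l}+f_{l}^{\perp}$ with $f_{l}=\E(f|\HKZ_{l}(X))$. Theorem~\ref{thm:uniform-convergence-to-zero} applied to $f_{l}^{\perp}$ provides, for any fixed $\Gamma$-rational filtered nilmanifold $(G,\Gamma,\Gb)$ of length $l$, a full measure set on which
\[
\sup_{g\in\poly,\;\|F\|_{W^{k,2^{l}}}\le 1}\Big|\aveFn f_{l}^{\perp}(T^{n}x)F(g(n)\Gamma)\Big|\xrightarrow{N\to\infty}0.
\]
Enumerating a countable family of such $(G,\Gamma,\Gb)$ that is rich enough to approximate every basic generalized $l$-step nilsequence (using the fact that such a sequence factors through a single nilmanifold whose structural data is discrete) and intersecting the corresponding full measure sets over all $l\in\N$ produces a set $X_{0}\subset X$ of full measure on which $f_{l}^{\perp}$ contributes $o(1)$ to \eqref{ave} uniformly in the weight.

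\textbf{Structured part and conclusion.} Since $\HKZ_{l}(X)$ is an inverse limit of $l$-step nilsystems, $f_{l}$ can be $L^{\infty}$-approximated by continuous functions on nilfactors $Y/\Lambda$. The averages of $f_{l}(T^{n}x)F(g(n)\Gamma)$ then reduce to Birkhoff sums of a continuous function along the joint polynomial orbit $n\mapsto(T_{Y}^{n}\pi(x),g(n)\Gamma)$ on $Y/\Lambda\times G/\Gamma$. By Leibman's theorem~\cite{MR2122919}*{Theorem~B} this orbit is equidistributed in a finite union of sub-nilmanifolds for every starting point, so the averages converge for every $\pi(x)$ in a full measure subset of $Y/\Lambda$; by measure preservation of $\pi$ this pulls back to a full measure set $X_{1}\subset X$. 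Taking $X'=X_{0}\cap X_{1}$ proves the first assertion. For the topological part, unique ergodicity and continuity of $\pi$ yield the decomposition $f=f_{l}\circ\pi+(f-f_{l}\circ\pi)$ into continuous summands with the second one in $C(X)\cap\HKZ_{l}(X)^{\perp}$; \eqref{eq:ave-uniform-in-X} kills this piece uniformly on $X$, while Leibman's \emph{pointwise} equidistribution on $\HKZ_{l}(X)\times G/\Gamma$ gives convergence of the first piece for every $x$.

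The main obstacle is the enumeration argument in the orthogonal step: one must verify that a single countable family of filtered nilmanifolds (for each $l$) suffices to handle every basic generalized $l$-step nilsequence via the uniform-limit definition and Sobolev approximation, so that $X'$ is genuinely independent of $(a_{n})$. The remaining ingredients are well-established applications of Host--Kra structure theory and Leibman equidistribution.
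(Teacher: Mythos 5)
There are genuine gaps. The most serious one is your reduction to smooth $F$: you cannot pass from a Riemann integrable $F$ on $\tilde Y$ (or even from a continuous $F$ on $G/\Gamma$) to a Sobolev function by ``approximation in $L^{1}(G/\Gamma)$'', because the orbit $(g(n)\Gamma)_{n}$ equidistributes in $\tilde Y$, which is in general a null set of $G/\Gamma$, so an $L^{1}(G/\Gamma)$-approximation says nothing about the values $F(g(n)\Gamma)$. The paper handles this by a sandwich argument: for $0\leq f\leq 1$ one chooses continuous $F_\veps\leq F\leq H_\veps$ with $\int_{\tilde Y}(H_\veps-F_\veps)\dif\nu<\veps$ (Tietze plus Riemann integrability on $\tilde Y$), mollifies, and uses Leibman's equidistribution theorem to show that the oscillation of the averages is at most $\lim_N\aveFn(H_\veps-F_\veps)(g(n)\Gamma)=\int_{\tilde Y}(H_\veps-F_\veps)\dif\nu$; positivity of $f$ is essential here, which is why the paper first normalizes to $0\leq f\leq 1$. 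A second gap is your treatment of the structured part: $\E(f|\HKZ_l(X))$ cannot in general be approximated in $L^{\infty}$ by continuous functions on nilfactors (only in $L^{1}$ or $L^{2}$), and an $L^{1}$-error must then be controlled pointwise at $x$, which requires genericity of $x$ for each approximant. The paper instead invokes the structure theorem of Chu, Frantzikinakis and Host (\cite{MR2795725}*{Proposition 3.1}), which produces uniformly bounded $f_j$ with $\|\E(f|\HKZ_l)-f_j\|_{L^1}\to 0$ such that $(f_j(T^nx))_n$ is itself an $l$-step nilsequence for a.e.\ $x$; then the product with $(F(g(n)\Gamma))_n$ is again a nilsequence whose Ces\`aro averages converge by Leibman's Theorem A, and the error $f-f_j$ is controlled by Theorem~\ref{thm:main} through $\|f-f_j\|_{U^l(X,\mu_x)}\to 0$. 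This also works directly in the non-ergodic setting, whereas your ``by ergodic decomposition it suffices to treat the ergodic case'' needs the regular model and genericity of a.e.\ $x$ with respect to $\mu_x$ spelled out.

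The ``enumeration obstacle'' you flag is actually a non-issue if you use Theorem~\ref{thm:main} rather than Theorem~\ref{thm:uniform-convergence-to-zero}: the exceptional set there consists of the points that are not fully generic for $f$, and the estimate holds simultaneously for \emph{all} nilmanifolds, filtrations, polynomials and test functions, so no countable family of nilmanifolds needs to be chosen. Finally, in the topological part your decomposition $f=f_l\circ\pi+(f-f_l\circ\pi)$ into continuous summands is unjustified: continuity of $\pi$ does not make the conditional expectation $\E(f|\HKZ_l)$ continuous. The paper instead uses that under these hypotheses the approximants $f_j$ can be chosen continuous (by the local characterization of nilsequences, \cite{MR2600993}*{Theorem A}) and that every point is generic, so the a.e.\ argument upgrades to every $x$.
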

See Host, Kra and Maass \cite{2012arXiv1203.3778H}*{remarks following Theorem 3.5} for examples of systems for which the additional hypothesis is satisfied.

A consequence of this result concerning norm convergence of weighted polynomial multiple ergodic averages due to Chu \cite{MR2465660}, cf.\  Host, Kra \cite{MR2544760} for the linear case, is discussed in Section~\ref{sec:multiple}.

\begin{ack}
The work on the paper began during the first author's research visit to the University of California, Los Angeles.
She is deeply grateful to her host Terence Tao for many helpful and motivating discussions without which the paper would not have been written.
She thanks UCLA and its analysis group for perfect working conditions and friendly and pleasant atmosphere.
The authors thank Bernard Host and Bryna Kra for their comments, Idris Assani for references and Example~\ref{ex:assani}, Nikos Frantzikinakis for a hint regarding non-ergodic systems and the anonymous referees for corrections and helpful suggestions.
\end{ack}

\section{Notation and tools}\label{sec:preliminary}
We begin with the notions and tools needed.
Throughout the paper we assume an $L^\infty$-function to be defined everywhere.
\begin{definition}[F\o{}lner sequence]
A sequence  $(\Phi_n)$ of finite subsets of a discrete group $G$ is called \emph{F\o{}lner} if for every $g\in G$
\[
\frac{|g\Phi_n\triangle \Phi_n|}{|\Phi_n|}\to 0 \quad \text{as } n\to \infty
\]
holds.
Moreover, a F\o{}lner sequence is called \emph{tempered} (or said to satisfy Shulman's condition) if there exists $C>0$ such that for every $n\in \N$ one has
\[
\Big|\bigcup_{k=1}^n \Phi_k^{-1} \Phi_{n+1}\Big|\leq C|\Phi_{n+1}|.
\]
\end{definition}
Recall that the \emph{maximal function} is defined by
\[
Mf(x) := \sup_{N} \Big| \aveFn f(T^{n}x) \Big|
\text{ for } f\in L^{1}(X).
\]
Lindenstrauss' maximal inequality \cite{MR1865397}*{Theorem 3.2} asserts that for every $f\in L^{1}(X)$ and every $\lambda>0$ we have
\begin{equation}
\label{eq:maximal-inequality}
\mu\{Mf>\lambda\} \lesssim \lambda\inv \|f\|_{1},
\end{equation}
where the implied constant depends only on the constant in the temperedness condition.
\begin{definition}[Generic point]
Let $(\Phi_N)$ be a tempered F\o{}lner sequence in $\Z$, $(X,\mu,T)$ be an ergodic system, and let $f\in
L^\infty(X,\mu)$.
We call $x\in X$ \emph{generic for $f$ with respect to $(\Phi_N)$} if
\[
\aveFn f(T^nx) \to \int_X f \, \dif\mu.
\]
We call $x\in X$ \emph{fully generic for $f$ w.r.t.~$(\Phi_N)$} if it is generic for
every function $g$ in the
(separable)
$T$-invariant subalgebra generated by $f$.
\end{definition}

By a generalization by Lindenstrauss \cite{MR1865397}*{Theorem 1.2} of Birkhoff's ergodic theorem to tempered F\o{}lner sequences, generic and hence fully generic points form a set of full measure.
The temperedness assumption cannot be dropped even for sequences of intervals with growing length in $\Z$, see del Junco, Rosenblatt \cite{MR553340} and Rosenblatt, Wierdl \cite{MR1182661}.
We refer to Butkevich \cite{butkevich} for an overview on pointwise convergence of ergodic averages along F\o{}lner sequences in $\Z$ and general groups, examples and further references.

A measure-preserving system $(X,\mu,T)$ is called \emph{regular} if $X$ is a compact metric space, $\mu$ is a Borel probability measure and $T$ is continuous.
Every measure-preserving system is measurably isomorphic to a regular measure-preserving system upon restriction to a separable $T$-invariant sub-$\sigma$-algebra \cite{MR603625}*{\textsection 5.2}.
The \emph{ergodic decomposition} of the measure on a regular measure-preserving system $(X,\mu,T)$ is a measurable map $x\mapsto\mu_{x}$ from $X$ to the space of $T$-invariant ergodic Borel probability measures on $X$, unique up to equality $\mu$-a.e., such that $\mu$-a.e.\ $x\in X$ is generic for every $f\in C(X)$ w.r.t.\ $\mu_{x}$ and $\mu=\int\mu_{x}\dif\mu(x)$ \cite{MR603625}*{\textsection 5.4}.
Moreover, for every $f\in L^{1}(\mu)$, for $\mu$-a.e.\ $x\in X$  we have that $f\in L^{1}(\mu_{x})$ and $x$ is generic for $f$ w.r.t.\ $\mu_{x}$.

\begin{definition}[Gowers-Host-Kra seminorms]
For a probability measure preserving system $(X,\mu, T)$ and $f\in L^\infty(X,\mu)$,
the \emph{Gowers-Host-Kra seminorms} are defined recursively by
\[
\|f\|_{U^0(X,\mu)}:=\int_X f\dif\mu,\quad
\|f\|_{U^{l+1}(X,\mu)}^{2^{l+1}}:=\limsup_{N\to\infty}\frac{1}{N}\sum_{n=1}^N\|T^n f \bar{f}\|_{U^{l}(X,\mu)}^{2^{l}}.
\]
We will write $U^{l}(X)$ instead of $U^{l}(X,\mu)$ if no confusion is possible.
\end{definition}
These seminorms (that are indeed seminorms for $l\geq 1$) have been introduced by Host and Kra in the ergodic case \cite{MR2150389} and also make sense in the non-ergodic case as pointed out by Chu, Frantzikinakis and Host \cite{MR2795725}.
The limit superior in the above definition is in fact a limit as follows from the characterization of these seminorms via cube spaces \cite{MR2150389}*{\textsection 3.5} and the mean ergodic theorem.
It follows by induction on $l\in\N$ that
\begin{equation}
\label{eq:est-U-by-L}
\|\cdot\|_{U^{l+1}(X)}\leq \|\cdot\|_{L^{2^{l}}(X)},
\end{equation}
see \cite{MR2944094} for subtler analysis.
Moreover, if $\mu = \int \mu_{x} \dif\mu(x)$ is the ergodic decomposition then
\[
\|f\|_{U^{l}(X,\mu)}^{2^{l}}=\int \|f\|_{U^{l}(X,\mu_{x})}^{2^{l}} \dif\mu(x)
\text{ for all } f\in L^{\infty}(\mu).
\]
If $(X,\mu,T)$ is ergodic then for each $l$ there is a factor $\HKZ_l(X)$ of $X$, called the Host-Kra factor of order $l$, that is an inverse limit of $l$-step nilsystems and is such that for all $f\in L^\infty(X)$
\[
\|f\|_{U^{l+1}(X)}=0 \iff \E(f|\HKZ_l(X))=0.
\]
Since the uniformity seminorms are bounded by the supremum norm and invariant under $T$ and complex conjugation they can also be calculated using smoothed averages
\begin{equation}
\label{eq:uniformity-seminorms-smoothed}
\|f\|_{U^{l+1}(X)}^{2^{l+1}}=\lim_{K\to\infty}\frac{1}{K^{2}}\sum_{k=-K}^K (K-|k|)\|T^k f \bar{f}\|_{U^{l}(X)}^{2^{l}}.
\end{equation}
This will allow us to use the following quantitative version of the classical van der Corput estimate (the proof is included for completeness).
Here $o_{K}(1)$ stands for a quantity that goes to zero for each fixed $K$ as $N\to\infty$.
\begin{lemma}[Van der Corput]\label{VdC}
Let $(\Phi_{N})_{N}$ be a F\o{}lner sequence in $\Z$ and $(u_n)_{n\in\Z}$ be a sequence in a Hilbert space with norm bounded by $C$.
Then for every $K>0$ we have
\[
\Big\| \aveFn u_{n} \Big\|^{2} \leq \Big| \frac{2}{K^{2}} \sum_{k=-K}^{K}(K-|k|) \aveFn \langle u_{n},u_{n+k} \rangle \Big| + C^{2} o_{K}(1).
\]
\end{lemma}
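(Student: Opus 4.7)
The plan is to derive the bound from a double application of Cauchy--Schwarz (or Jensen) after a Følner-type smoothing of the sequence $(u_n)_n$.

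The first step is to replace $\aveFn u_n$ by $\aveFn \big(\frac{1}{K}\sum_{j=0}^{K-1} u_{n+j}\big)$. The difference between these two averages equals $\frac{1}{K}\sum_{j=0}^{K-1}(\aveFn u_n - \aveFn u_{n+j})$, and for each fixed $j$ the Følner property forces $\aveFn u_n - \aveFn u_{n+j}$ to have norm at most $C \cdot |\Phi_N \triangle (\Phi_N - j)|/|\Phi_N|$, which tends to $0$ as $N\to\infty$. So the substitution introduces only an $C\cdot o_K(1)$ error, which after squaring becomes $C^2 o_K(1)$ (absorbing also a crossed term via $2ab \leq a^2 + b^2$).

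The second step is to apply Jensen's inequality to pull the square inside the outer average:
\[
\Big\| \aveFn \frac{1}{K}\sum_{j=0}^{K-1} u_{n+j} \Big\|^2 \leq \aveFn \Big\| \frac{1}{K}\sum_{j=0}^{K-1} u_{n+j} \Big\|^2.
\]
Expanding the square, writing $k=j'-j$ and counting pairs gives
\[
\Big\| \frac{1}{K}\sum_{j=0}^{K-1} u_{n+j} \Big\|^2 = \frac{1}{K^2}\sum_{k=-(K-1)}^{K-1}\sum_{\substack{0\leq j\leq K-1 \\ 0\leq j+k\leq K-1}} \langle u_{n+j}, u_{n+j+k}\rangle.
\]
Swapping the outer average with the inner sums and invoking the Følner property once more, for each fixed $(j,k)$ we have $\aveFn \langle u_{n+j},u_{n+j+k}\rangle = \aveFn \langle u_n,u_{n+k}\rangle + C^2 o_K(1)$. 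Summing the contributions and using that the inner-index count is exactly $K-|k|$ yields the main term $\frac{1}{K^2}\sum_{|k|<K}(K-|k|)\aveFn\langle u_n,u_{n+k}\rangle$, plus accumulated error of size $C^2 o_K(1)$. Extending the sum to $|k|\leq K$ is free since the coefficient $K-|k|$ vanishes at $|k|=K$, and taking absolute values (the left-hand side being non-negative real) and estimating the resulting expression by twice itself gives the stated inequality.

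There is no real obstacle beyond bookkeeping: one must keep careful track of the Følner errors so that all of them collapse into a single $C^2 o_K(1)$ term with $o_K(1) \to 0$ for each fixed $K$ as $N\to\infty$, uniformly in $k$ thanks to $|k|\leq K$. No temperedness is required at this stage—the ordinary Følner condition is enough, applied boundedly many (namely $O(K^2)$) times for each $K$.
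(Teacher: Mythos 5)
Your proof is correct and follows essentially the same route as the paper: smooth the average over a window of length $K$ using the F\o{}lner property, apply Jensen/H\"older to move the square inside the outer average, expand and re-index to get the $(K-|k|)$ weights, and absorb all errors and the cross term into $C^2 o_K(1)$ with the factor $2$ coming from $(a+b)^2\leq 2a^2+2b^2$. The only cosmetic difference is that you spell out the re-indexing step $\aveFn\langle u_{n+j},u_{n+j+k}\rangle=\aveFn\langle u_n,u_{n+k}\rangle+o_K(1)$ explicitly, which the paper folds into a single error term.
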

\begin{proof}
Let $K>0$ be given.
By the definition of a F\o{}lner sequence we have
\[
\aveFn u_{n}
=
\aveFn \frac{1}{K} \sum_{k=1}^K u_{k+n} + C o_{K}(1).
\]
By Hölder's inequality
\begin{multline*}
\Big\| \aveFn \frac{1}{K} \sum_{k=1}^K u_{k+n} \Big\|^{2}
\leq
\aveFn \Big\|\frac{1}{K} \sum_{k=1}^K u_{k+n}\Big\|^{2}\\
=
\frac{1}{K^{2}} \sum_{k=-K}^{K}(K-|k|) \aveFn \langle u_{n},u_{n+k} \rangle + C^{2} o_{K}(1),
\end{multline*}
and the claim follows using the
estimate $(a+b)^{2}\leq 2a^{2} + 2b^{2}$.
\end{proof}

We now recall the notions of a (nilpotent) (pre-)filtration and a polynomial sequence.
Since in this article we always work in the category of Lie groups we demand all groups in any prefiltration to be Lie.
As mentioned in the introduction, we only consider Lie groups in which the connected component of the identity is simply connected.
\begin{definition}[(Pre-)filtration]
A \emph{prefiltration} $\Gb$ of length $l\in\N=\{0,1,\dots\}$ is a sequence of nested Lie groups
\begin{equation}
\label{eq:prefiltration}
G_{0} \geq G_{1} \geq \dots \geq G_{l+1}=\{1_{G}\}
\quad\text{such that}\quad
[G_{i},G_{j}]\subset G_{i+j}
\quad\text{if }i,j\geq 0,\, i+j\leq l+1.
\end{equation}
The sequence that consists of the trivial group is called the prefiltration of length $-\infty$.
A \emph{filtration} (on a group $G$) is a prefiltration $\Gb$
such that $G_{0}=G_{1}$ (and $G_{0}=G$).
\end{definition}
Although prefiltrations behave well in algebraic constructions, in our analytic arguments we will have to work with filtrations.
Note that in a prefiltration $\Gb$ of length $l$, the subgroup $G_{l}$ need not be central in $G_{0}$.

It is well-known that the lower central series on a nilpotent Lie group $G$ is a filtration on $G$.
If $\Gb$ is a prefiltration of length $l$ and $t\leq l$ then $\Gb[+t]$ denotes the prefiltration of length $l-t$ given by $(\Gb[+t])_{i}=G_{i+t}$.
We will denote the dimension of a Lie group by $d=\dim G$ and the dimension of the $i$-th group in a prefiltration by $d_{i}=\dim G_{i}$.

We define $\Gb$-polynomial sequences by induction on the length of the prefiltration.
\begin{definition}[Polynomial]
\label{def:polynomial}
Let $\Gb$ be a prefiltration of length $l$.
A sequence $g\from\Z\to G_{0}$ is called \emph{$\Gb$-polynomial} if either $l=-\infty$ (so that $g\equiv 1_{G}$) or for every $k\in\Z$ the sequence
\begin{equation}
D_{k}g(n) = g(n)\inv g(n+k)
\end{equation}
is $\Gb[+1]$-polynomial.
We write $\poly$ for the set of $\Gb$-polynomial maps.
\end{definition}
By a result originally due to Leibman \cite{MR1910931} (see \cite{arxiv:1206.0287} for a short proof) the set $\poly$ is in fact a group under pointwise operations and the sequence
\[
\poly \geq \poly[{\Gb[+1]}] \geq \dots \geq \poly[{\Gb[+l+1]}]
\]
is a prefiltration.
We will not need the full strength of this result, but merely that a multiple of a $\Gb$-polynomial sequence and any constant sequence in $G_{0}$ is again $\Gb$-polynomial (this can be easily seen from the definition).

Finally we outline a special case of the cube construction of Green, Tao and Ziegler \cite{2010arXiv1009.3998G}*{Definition B.2} using notation of Green and Tao \cite{MR2877065}*{Proposition 7.2}.
We will only have to perform it on filtrations, but even in this case the result is in general only a prefiltration.
\begin{definition}[Cube construction]
Given a prefiltration $\Gb$ we define the prefiltration $\Gb^{\square}$ by
\[
G^{\square}_{i} := G_{i} \times_{G_{i+1}} G_{i} = \<G_{i}^{\triangle},G_{i+1}\times G_{i+1}\> = \{(g_{0},g_{1})\in G_{i}\times G_{i} : g_{0}\inv g_{1}\in G_{i+1}\},
\]
where $G^{\triangle}=\{(g_{0},g_{1})\in G^{2} : g_{0}=g_{1}\}$ is the diagonal group corresponding to $G$.
By an abuse of notation we refer to the filtration obtained from $\Gb^{\square}$ by replacing $G_{0}^{\square}$ with $G_{1}^{\square}$ as the ``filtration $\Gb^{\square}$''.
\end{definition}
To see that this indeed defines a prefiltration let $x\in G_{i}$, $y\in G_{i+1}$, $u\in G_{j}$, $v\in G_{j+1}$, so that $(x,xy)\in G_{i}^{\square}$ and $(u,uv)\in G_{j}^{\square}$.
Then $[(x,xy),(u,uv)]=([x,u],[xy,uv]) \in G_{i+j}^{\square}$, since $[x,u]\in G_{i+j}$ and
\[
[xy,uv]
=
[x,u] [x,v] [[x,v],[x,u]] [[x,u],v]
[[x,uv],y][y,uv]
\in
[x,u] G_{i+j+1}
\]
(or see \cite{MR2877065}*{Prop.\ 7.2}).
Let now $g\in\poly$.
We show by induction on the length of the prefiltration $\Gb$ that for every $k\in\Z$ the map
\[
g^{\square}_{k}(n) := (g(n+k),g(n))
\]
is $\Gb^{\square}$-polynomial.
Indeed, for $l=-\infty$ there is nothing to show.
If $l\geq 0$ then $g^{\square}_{k}$ takes values in $G^{\square}_{0}$ since $g(n)\inv g(n+k)=D_{k}g(n)\in G_{1}$ by definition of a polynomial.
Moreover $D_{k'}(g^{\square}_{k})=(D_{k'}g)^{\square}_{k}$, so that $D_{k'}(g^{\square}_{k})$ is $\Gb[+1]^{\square}$-polynomial by the induction hypothesis.

As remarked earlier, the prefiltration $\Gb$ and the filtration $\Gb$ are in general distinct concepts.
Also the map $g_{k}^{\square}$ is in general not polynomial with respect to the filtration $\Gb$ since it need not take values in $G_{1}^{\square}$.
However, this is a very mild obstacle and a slight modification of $g^{\square}_{k}$ will work.
A natural candidate is $g_{k}^{\square}(0)\inv g_{k}^{\square}$, but later in the proof this choice would lead to shifts of a function on $G/\Gamma$ by $g(k)$ for every $k$, and there is no useful control on Sobolev norms of such shifts in terms of Sobolev norms of the original function.
Instead we would like to shift only by elements that belong to a fixed compact set and this requires a more sophisticated modification.
\begin{lemma}[Fundamental domain]
\label{lem:fundamental-domain}
Let $\Gamma\leq G$ be a cocompact lattice.
Then there exists a relatively compact set $K\subset G$ and a map $G\to K$, $g\mapsto \{g\}$ such that $g\Gamma = \{g\}\Gamma$ for each $g\in G$.
\end{lemma}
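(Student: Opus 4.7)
The plan is to take $K$ to be (the closure of) a precompact open subset of $G$ that already surjects onto $G/\Gamma$, and then for each coset simply pick a representative lying in $K$. There is essentially no analysis here; the only things to verify are that cocompactness plus local compactness produce such a $K$, and that a selector exists.

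First I would use the fact that the quotient map $\pi\from G\to G/\Gamma$ is open (since $\Gamma$ acts on $G$ by right translations, each of which is a homeomorphism). Fix any relatively compact open neighborhood $V$ of $1_{G}$. Then $\pi(V)$ is open in the compact space $G/\Gamma$, so finitely many right translates $\pi(V g_{1}),\dots,\pi(V g_{n})$ cover $G/\Gamma$. Setting $U=V g_{1}\cup\dots\cup V g_{n}$ and $K=\overline{U}$, one gets a compact (hence relatively compact) subset of $G$ with $\pi(K)=G/\Gamma$.

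Next, given any $g\in G$, surjectivity of $\pi|_{K}$ means the coset $g\Gamma$ meets $K$; since $\Gamma$ is discrete in $G$ and $K$ is compact, the intersection $g\Gamma\cap K$ is actually finite. A choice of an element in $g\Gamma\cap K$ for each coset (via the axiom of choice, applied to the quotient set $G/\Gamma$) yields a map $G/\Gamma\to K$, and precomposing with $\pi$ gives the desired $g\mapsto\{g\}$. By construction $\{g\}\in g\Gamma\cap K\subset K$ and $\{g\}\Gamma=g\Gamma$.

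The only real subtlety, and the step I would expect to think about most carefully, is that the mere statement asks just for a set-theoretic map, whereas later applications typically want the selector to behave well — e.g.\ to be Borel measurable or at least to have nice behaviour under multiplication. If needed, measurability can be arranged by refining the finite cover $\{\pi(V g_{i})\}$ into a disjoint Borel partition $W_{1},\dots,W_{n}$ of $G/\Gamma$ with each $\pi|_{Vg_{i}}^{-1}$ restricting to a Borel section over $W_{i}$, and defining $\{\,\cdot\,\}$ piecewise. But for the statement as written, no such refinement is required.
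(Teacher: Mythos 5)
Your argument is correct and is essentially the fleshed-out version of the paper's one-line justification: both rest on local compactness of $G$, compactness of $G/\Gamma$ and openness of the quotient map to produce a relatively compact set meeting every coset, followed by an arbitrary choice of representatives. Your closing remark is also accurate — the paper only ever uses that $\{g\}$ lies in a fixed compact set and satisfies $\{g\}\Gamma=g\Gamma$, so no measurability or regularity of the selector is needed.
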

This follows from local homeomorphy of $G$ and $G/\Gamma$, from local compactness of $G$ and from compactness of $G/\Gamma$.
For example, for $G=\R$ and $\Gamma=\Z$ the fundamental domain $K$ can be taken to be the interval $[0,1)$ with the fractional part map $\{ \cdot\}$.
In case of a general connected Lie group the fundamental domain can be taken to be $[0,1)^{\dim(G)}$ in Mal'cev coordinates \cite{MR2877065}*{Lemma A.14}, but we do not need this information.

For each nilmanifold that we consider we fix some map $\{\cdot\}$ as above and define
\begin{equation}
\label{eq:g-tilde}
\tilde g_{k} := (\{g(k)\}\inv g(n+k)g(k)\inv \{g(k)\},\{g(0)\}\inv g(n)g(0)\inv \{g(0)\}).
\end{equation}
This is the conjugate of $g_{k}^{\square} g_{k}^{\square}(0)\inv$ by $(\{g(k)\},\{g(0)\}) \in G_{1}^{2} \subset G_{0}^{\square}$, hence $\Gb^{\square}$-polynomial with values in $G_{1}^{\square}$.

We will use Mal'cev bases adapted to filtrations in the sense of \cite{MR2877065}*{Definition 2.1} with the additional twist that we consider not necessarily connected Lie groups.
This provides additional generality since, by the remark following \cite{MR2445824}*{Theorem 3}, not every nilsequence arises from nilmanifolds associated to connected Lie groups.
\begin{definition}[Mal'cev basis adapted to a filtration]
\label{def:malcev-basis}
Let $G$ be a nilpotent Lie group with a cocompact lattice $\Gamma$ and a filtration $\Gb$ of length $l$ that consists of connected, simply connected Lie groups.
An ordered basis $\{X_{1},\dots,X_{d}\}$ for the Lie algebra of $G$ is called a \emph{Mal'cev basis for $G/\Gamma$ adapted to $\Gb$} if the following conditions are satisfied.
\begin{enumerate}
\item For each $i=1,\dots,l$ the Lie algebra of $G_{i}$ coincides with $\<X_{d-d_{i}+1},\dots,X_{d}\>$.
\item For each $g\in G$ there exist unique numbers $t_{1},\dots,t_{d_{1}}\in\R$, called \emph{Mal'cev coordinates} of $g$, such that $g=\exp(t_{1}X_{1})\dots\exp(t_{d}X_{d})$.
\item The lattice $\Gamma$ consists precisely of the elements with integer Mal'cev coordinates.
\end{enumerate}
\end{definition}
\begin{definition}[Rational filtration]
\label{def:rational-filtration}
We call a filtration $\Gb$ of length $l$ that consists of (not necessarily connected) Lie groups \emph{$\Gamma$-rational} if for every $i=1,\dots,l$ the subgroup $\Gamma_{i}:=\Gamma\cap G_{i}$ is cocompact in $G_{i}$ and there exists a (fixed) Mal'cev basis for $G^{o}/\Gamma^{o}$ adapted to $\Gb^{o}$, where $G^{o}$ denotes the connected component of the identity of a group $G$ and $\Gamma^{o}:=\Gamma\cap G^{o}$.
\end{definition}
The lower central series on a (not necessarily connected) nilpotent Lie group $G$ is $\Gamma$-rational for every cocompact lattice $\Gamma$ \cite{MR0028842}.
In this case Mal'cev coordinates on $G^{o}$ are usually called \emph{coordinates of the second kind}.
Any subfiltration of a rational filtration is clearly rational.
\begin{definition}[Sobolev space]
Let $G/\Gamma$ be a nilmanifold with a $\Gamma$-rational filtration, so in particular we have a Mal'cev basis $\{X_{1},\dots,X_{d}\}$ for the Lie algebra of $G$.
We identify the vectors $X_{i}$ with their extensions to right invariant vector fields on $G/\Gamma$.
The Sobolev space $W^{j,p}(G/\Gamma)$, $j\in\N$, $1\leq p<\infty$, is defined by the norm
\[
\| F \|_{W^{j,p}(G/\Gamma)}^{p}
=
\sum_{a=0}^{j} \sum_{b_{1},\dots,b_{a}=1}^{d} \| X_{b_{1}}\dots X_{b_{a}} F \|_{L^{p}(G/\Gamma)}^{p}.
\]
\end{definition}
Finally, since we will use induction over rational filtrations in the proof of our main result and the inductive hypothesis will involve $\Gb^{\square}$, we have to show that this filtration is rational provided that $\Gb$ is rational.
This follows from the next lemma.
\begin{lemma}[Rationality of the cube filtration]
\label{lem:rational-cube}
Let $\Gb$ be a $\Gamma$-rational filtration.
Then the filtration
\[
G_{0}^{2} = G_{1}^{2} \geq G_{1}^{\square} \geq G_{2}^{2} \geq G_{2}^{\square} \geq
\dots \geq G_{l}^{2} \geq G_{l}^{\square} \geq G_{l+1}^{2} = \{1_{G\times G}\}.
\]
is $\Gamma^{2}$-rational.
In particular, $\Gamma^{\square}=\Gamma^{2} \cap G_{1}^{\square}$ is a cocompact lattice in $G_{1}^{\square}$ and the filtration $\Gb^{\square}$ is $\Gamma^{\square}$-rational.
\end{lemma}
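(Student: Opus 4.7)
I verify three ingredients for the alternating chain: bracket inclusions of a filtration, cocompactness of $\Gamma^2$-intersections, and existence of a Mal'cev basis on the connected component. The concluding ``In particular'' clause then follows because $\Gb^\square$ (with $G_0^\square$ replaced by $G_1^\square$) is the subfiltration formed by the $G_i^\square$'s, so each of the three properties restricts.

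For the bracket conditions, re-index the chain as $H_{2i-1} := G_i^2$, $H_{2i} := G_i^\square$. The conditions $[H_a, H_b] \subset H_{a+b}$ reduce to three essentially different cases: $[G_i^2, G_j^2] \subset G_{i+j}^2 \subset G_{i+j-1}^\square$ (using $G_{i+j}^2 \subset G_{i+j-1}^\square$, immediate from the cube definition); the mixed case $[G_i^2, G_j^\square] \subset G_{i+j}^2$, which just restricts the previous; and $[G_i^\square, G_j^\square] \subset G_{i+j}^\square$, which is exactly the cube-commutator identity verified in the text just before this lemma.

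For cocompactness, $\Gamma^2 \cap G_i^2 = \Gamma_i^2$ is cocompact by rationality of $\Gb$. For $G_i^\square$, use the short exact sequence
\[
1 \to G_{i+1}^2 \to G_i^\square \xrightarrow{\pi} G_i/G_{i+1} \to 1,
\qquad
\pi(g_0, g_1) := g_0 G_{i+1} = g_1 G_{i+1}.
\]
Intersecting with $\Gamma^2$, the kernel contains the cocompact lattice $\Gamma_{i+1}^2$ of $G_{i+1}^2$, and the image equals $\Gamma_i G_{i+1}/G_{i+1}$, cocompact in $G_i/G_{i+1}$ because $G_i/(\Gamma_i G_{i+1})$ is a quotient of the compact space $G_i/\Gamma_i$ (note $G_{i+1}$ is normal in $G_i$ by the filtration property). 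A standard fibration argument then yields cocompactness of $\Gamma^2 \cap G_i^\square$ in $G_i^\square$.

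For the Mal'cev basis on $(G^o)^2$, I start from the given basis $X_1, \ldots, X_d$ of the Lie algebra of $G^o$ adapted to $\Gb^o$ (so $X_k$ for $k > d - d_i$ span the Lie algebra of $G_i^o$) and build $Y_1, \ldots, Y_{2d}$ by traversing levels $i = 1, \ldots, l$ in order and inserting at each level, for each $k \in \{d - d_i + 1, \ldots, d - d_{i+1}\}$, the pair $(X_k, 0)$ followed by $(X_k, X_k)$. A dimension count verifies that the last vectors span the required Lie subalgebra at every step of the alternating chain. The main obstacle is integrality: that $(\Gamma^o)^2$ corresponds exactly to integer Mal'cev coordinates of the second kind in this basis. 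This follows because the change of basis between the ``product'' basis $\{(X_k, 0), (0, X_k)\}$ and the cube-adapted basis $\{(X_k, 0), (X_k, X_k)\}$ is unimodular with integer entries, so integrality of linear coordinates transfers; an induction on the filtration length using Baker-Campbell-Hausdorff then propagates integrality from linear to second-kind coordinates, since the commutator corrections produced when reordering exponential factors always fall into strictly deeper subgroups of the filtration.
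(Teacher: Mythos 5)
Your overall strategy is sound and differs from the paper's in two respects. For cocompactness of $\Gamma^{2}\cap G_{i}^{\square}$ you use the exact sequence $1\to G_{i+1}^{2}\to G_{i}^{\square}\to G_{i}/G_{i+1}\to 1$ and a fibration argument; this treats connected and disconnected $G$ uniformly, whereas the paper first extracts cocompactness on the identity component from the adapted Mal'cev basis and then runs a separate finite-index computation with coset representatives to handle the component group. For the Mal'cev basis the paper simply cites \cite{MR2877065}*{Lemma 7.4}, while you attempt an explicit construction; that is more self-contained, but it is also where your two weak points lie. Your verification of the bracket inclusions for the alternating chain is correct and amounts to the computation the paper performs just before the lemma (for $[G_{i}^{\square},G_{j}^{\square}]$) together with the trivial inclusion $G_{i+j}^{2}\subset G_{i+j-1}^{\square}$.

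Two problems with the basis construction. First, the ordering is wrong: with the interleaved list $(X_{k},0),(X_{k},X_{k}),(X_{k+1},0),(X_{k+1},X_{k+1}),\dots$ the final $d_{i}+d_{i+1}$ vectors contain elements $(X_{k},0)$ with $X_{k}$ outside the Lie algebra of $G_{i+1}$; such vectors do not even lie in the Lie algebra of $G_{i}^{\square}$, so adaptedness fails whenever some $d_{i}-d_{i+1}\geq 2$. Within each level $i$ you must list \emph{all} the vectors $(X_{k},0)$, $k=d-d_{i}+1,\dots,d-d_{i+1}$, before all the vectors $(X_{k},X_{k})$; then the last $d_{i}+d_{i+1}$ vectors span exactly $\{(\xi_{0},\xi_{1}):\xi_{0},\xi_{1}\in\mathrm{Lie}(G_{i}),\ \xi_{0}-\xi_{1}\in\mathrm{Lie}(G_{i+1})\}$ and the last $2d_{i}$ span $\mathrm{Lie}(G_{i})^{2}$, as required. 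Second, the integrality claim --- that $(\Gamma^{o})^{2}$ is precisely the set of points with integer coordinates of the second kind in the new basis --- is the crux and is only gestured at: the unimodular integral change of \emph{linear} coordinates is fine, but transferring integrality to coordinates of the second kind requires the rationality bookkeeping for the Baker--Campbell--Hausdorff corrections that Green and Tao carry out in detail. Either flesh out that induction or, as the paper does, quote \cite{MR2877065}*{Lemma 7.4} for the existence of the adapted Mal'cev basis; with that in place, your exact-sequence argument for cocompactness and your bracket verification do complete the proof, including the concluding ``in particular'' clause by restriction to the subchain of the $G_{i}^{\square}$.
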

\begin{proof}
Observe first that $(G_{i}^{\square})^{o} = (G^{o})_{i}^{\square}$ for every $i$ since both are closed connected subgroups of $G^{2}$ and their Lie algebras coincide.
The existence (and several additional properties that we do not need) of the required Mal'cev basis follows from a result of Green and Tao \cite{MR2877065}*{Lemma 7.4}.
Clearly, $\Gamma_{i}^{2}$ is cocompact in $G_{i}^{2}$ for every $i=1,\dots,l$.

It remains to show that $\Gamma_{i}^{\square} = \Gamma^{2}\cap G_{i}^{\square}$ is cocompact in $G_{i}^{\square}$ for every $i=1,\dots,l$.
The existence of an adapted Mal'cev basis implies that $\Gamma^{2} \cap (G_{i}^{\square})^{o}$ is cocompact in $(G_{i}^{\square})^{o}$.
Thus it suffices to show that $\Gamma_{i}^{\square} (G_{i}^{\square})^{o}$ has finite index in $G_{i}^{\square}$.
By the assumption $\Gamma_{i}G_{i}^{o}$ has finite index in $G_{i}$ for each $i$, so it contains a finite index normal subgroup $N_{i}\leq G_{i}$ and we can write $G_{i}=A_{i}N_{i}$ with a finite set $A_{i}$.
With this notation we have
\[
G_{i}^{\square}
= G_{i}^{\triangle} G_{i+1}^{2}
= A_{i}^{\triangle} N_{i}^{\triangle} A_{i+1}^{2}N_{i+1}^{2}
\subseteq A_{i}^{\triangle} A_{i+1}^{\triangle} N_{i}^{\triangle} (A_{i+1}\inv A_{i+1} \times \{1_{G}\}) N_{i+1}^{2}.
\]
For every $a\in G_{i+1}$ and $n\in N_{i}$ we have $[a,n\inv]\in G_{i+1} = A_{i+1}N_{i+1}$, so $(n,n)(a,1_{G}) = (a,1_{G}) ([a,n\inv],1_{G}) (n,n) \in (a,1_{G}) (A_{i+1}\times\{1_{G}\}) N_{i+1}^{2} N_{i}^{\triangle}$, so that
\[
G_{i}^{\square}
\subseteq
A_{i}^{\triangle} A_{i+1}^{\triangle} (A_{i+1}\inv A_{i+1} \times \{1_{G}\}) (A_{i+1}\times\{1_{G}\}) N_{i+1}^{2} N_{i}^{\triangle} N_{i+1}^{2},
\]
and since $N_{i+1}^{2} N_{i}^{\triangle} N_{i+1}^{2} \subset \Gamma_{i}^{\square} (G^{o})_{i}^{\square}$ we are done.
\end{proof}

\section{Vertical characters}\label{sec:vertical}
Let $G$ be a nilpotent Lie group with a cocompact lattice $\Gamma$ and a $\Gamma$-rational filtration $\Gb$ of length $l$, so that $\Gamma_{i}=\Gamma\cap G_{i}$ is a cocompact lattice in $G_{i}$ for each $i=1,\dots,l$.
Then $G/\Gamma$ is a smooth principal bundle with the compact abelian Lie structure group $G_{l}/\Gamma_{l}$.
The fibers of this bundle are called ``vertical'' tori (as opposed to the ``horizontal'' torus $G/\Gamma G_{2}$) and everything related to Fourier analysis on them is called ``vertical''.
\begin{definition}[Vertical character]
Let $G/\Gamma$ be a nilmanifold and $\Gb$ a $\Gamma$-rational filtration on $G$.
A measurable function $F$ on $G/\Gamma$ is called a \emph{vertical character} if there exists a character $\chi\in \widehat{G_{l}/\Gamma_{l}}$ such that
for every $g_{l}\in G_{l}$ and a.e.\ $y\in G/\Gamma$
we have $F(g_{l}y)=\chi(g_{l}\Gamma_{l})F(y)$.
\end{definition}
The key ingredient of our proof is the following modification of a construction due to Green and Tao, see e.g.\ \cite{tao-fourier}*{Lemma 1.6.13} and
\cite{MR2877065}*{\textsection 7}, which shows that discrete derivatives of vertical character nilsequences are nilsequences of lower step.

Let $F$ be a smooth vertical character, $g\in\poly$ and $a_{n}=F(g(n)\Gamma)$ be the corresponding basic nilsequence.
A calculation shows that for every $k\in\Z$ we have
\[
a_{n+k} \ol{a_{n}}
= (F \otimes \ol{F})(g^{\square}_{k}(n)\Gamma^{2})
= \left(\{g(k)\}F \otimes \{g(0)\}\ol{F} \right)(\tilde g_{k}(n)\Gamma^{2})
=: \tilde F_{k} (\tilde g_{k}(n) \Gamma^{\square}),
\]
where $\tilde F_{k}$ is the restriction of $\{g(k)\}F\otimes \{g(0)\}\ol{F}$ from $G_{1}^{2}$ to $G_{1}^{\square}$.
Recall that the filtration $\Gb^{\square}$ is $\Gamma^{\square}$-rational by Lemma~\ref{lem:rational-cube}.
Since $F$ is a vertical character, $\{g(k)\}F\otimes \{g(0)\}\ol{F}$ is $G^{\square}_{l}$-invariant (note that $G^{\square}_{l}=G^{\triangle}_{l}$), so that $\tilde F_{k}$ is well-defined on $\tilde Y=\tilde G_{1}/\tilde\Gamma_{1}$, where $\tilde G = G_{1}^{\square}/G_{l}^{\square}$ is a nilpotent group with the cocompact lattice $\tilde\Gamma=\Gamma^{\square}G^{\square}_{l}/G^{\square}_{l}$ and the $\tilde\Gamma$-rational filtration $\tilde G_{i}=G^{\square}_{i}/G^{\square}_{l}$, $i=1,\dots,l$, $\tilde G_{0}=\tilde G_{1}$.
Abusing the notation we may consider $\tilde g_{k}(n)$ also as an element of $\poly[\tilde \Gb]$, so
\[
a_{n+k}\ol{a_{n}} = \tilde F_{k} (\tilde g_{k}(n) \tilde\Gamma)
\]
is a basic nilsequence of step $l-1$.

We will write $A\lesssim_{D} B$ if $A$ and $B$ satisfy the inequality $A\leq CB$ with some constant $C$ that depends on some auxiliary constant(s) $D$ and some geometric data.
\begin{lemma}[Control on Sobolev norms in the cube construction]
\label{lem:sobolev-norm-of-tensor-product}
With the above notation we have
\begin{equation}
\label{eq:estimate-discrete-derivative}
\|\tilde F_{k}\|_{W^{j,p}(\tilde{G}/\tilde{\Gamma})}\lesssim_{j,p} \|F\|^{2}_{W^{j,2p}(G/\Gamma)} \text{ for any } j\in\N, p\in [1,\infty),
\end{equation}
where the implied constant does not depend on $k$ and $F$.
\end{lemma}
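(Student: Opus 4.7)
The plan is to reduce \eqref{eq:estimate-discrete-derivative} to estimating the tensor product $F_{1}\otimes F_{2}$ on the subgroup $G_{1}^{\square}/\Gamma^{\square}$, where $F_{1}:=\{g(k)\}F$ and $F_{2}:=\{g(0)\}\ol F$, and then to bound this via a trace-type inequality by $\|F_{1}\|_{W^{j,2p}(G/\Gamma)}\|F_{2}\|_{W^{j,2p}(G/\Gamma)}$. The main obstacle will be this trace step: the restriction from $G_{1}^{2}/\Gamma^{2}$ to the codimension-$(d_{1}-d_{2})$ subgroup $G_{1}^{\square}/\Gamma^{\square}$ loses integrability, and it is the Cauchy--Schwarz decoupling of the two factors on the subgroup that forces the passage from $L^{p}$ to $L^{2p}$ in the statement. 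Translations by fundamental-domain elements and the descent under the diagonal action of $G_{l}^{\square}$ contribute only harmless constants depending on the geometric data.

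First, since $F$ is a vertical character for some $\chi\in\widehat{G_{l}/\Gamma_{l}}$, we have $\chi(a)\bar\chi(a)=1$, so $\{g(k)\}F\otimes\{g(0)\}\ol F$ is invariant under the diagonal $G_{l}^{\triangle}=G_{l}^{\square}$ and descends to $\tilde F_{k}$ on $\tilde G/\tilde\Gamma$. Using Lemma~\ref{lem:rational-cube}, a Mal'cev basis adapted to $\tilde\Gb$ may be chosen as the initial segment, outside $\mathrm{Lie}(G_{l}^{\square})$, of a Mal'cev basis adapted to $\Gb^{\square}$. Up to a fixed Haar-measure normalization, the $W^{j,p}$-norm of $\tilde F_{k}$ is then comparable to the $W^{j,p}(G_{1}^{\square}/\Gamma^{\square})$-norm of the restriction of $\{g(k)\}F\otimes\{g(0)\}\ol F$.

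Second, translations are controlled uniformly. Since $\{g(k)\},\{g(0)\}$ lie in the fixed relatively compact set $K$ from Lemma~\ref{lem:fundamental-domain}, a direct computation with right-invariant vector fields gives
\[
X_{i}(hF) = \sum_{j} a_{ij}(h)\,(X_{j}F)\circ L_{h},
\]
where $(a_{ij}(h))$ is the matrix of $\mathrm{Ad}(h)$ in the basis $\{X_{i}\}$ and $L_{h}$ denotes left translation, which preserves Haar measure on $G/\Gamma$. The entries $a_{ij}(h)$ are uniformly bounded for $h\in K$ by continuity of $\mathrm{Ad}$, and iteration in the number of derivatives yields $\|hF\|_{W^{j,q}(G/\Gamma)}\lesssim_{j,q}\|F\|_{W^{j,q}(G/\Gamma)}$ uniformly in $h\in K$. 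It therefore remains to bound $\|(F_{1}\otimes F_{2})|_{G_{1}^{\square}/\Gamma^{\square}}\|_{W^{j,p}}$ by $\|F_{1}\|_{W^{j,2p}(G/\Gamma)}\|F_{2}\|_{W^{j,2p}(G/\Gamma)}$.

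Third, we carry out the Leibniz-and-trace estimate. A Mal'cev basis of $G_{1}^{\square}$ adapted to $\Gb^{\square}$ sits inside $\mathrm{Lie}(G_{1})\oplus\mathrm{Lie}(G_{1})$, so each of its vectors is a bounded linear combination of pure vectors $(X_{i},0)$ and $(0,X_{j})$. A word of length $\leq j$ in these vector fields, applied to $F_{1}\otimes F_{2}$, expands via the Leibniz rule into a finite sum of terms $(X^{\alpha}F_{1})\otimes(X^{\beta}F_{2})$ with $|\alpha|,|\beta|\leq j$. To estimate the $L^{p}(G_{1}^{\square}/\Gamma^{\square})$-norm of such a term, observe that each coordinate projection $\bar\pi_{i}\from G_{1}^{\square}/\Gamma^{\square}\to G_{1}/\Gamma_{1}$ is a measure-preserving fiber bundle with fiber of constant volume $|G_{2}/\Gamma_{2}|$; here we use normality of $G_{2}$ in $G_{1}$ (from the filtration condition) and of $\Gamma_{2}$ in $\Gamma_{1}$, together with unimodularity of nilpotent Lie groups, which makes the conjugation twists in the fibers measure-preserving. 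Consequently $\int_{G_{1}^{\square}/\Gamma^{\square}}|H\circ\bar\pi_{i}|^{2p}\,d\mu\lesssim\|H\|_{L^{2p}(G_{1}/\Gamma_{1})}^{2p}$, and Hölder's inequality applied with exponents $2p,2p$ gives
\[
\|(X^{\alpha}F_{1})\otimes(X^{\beta}F_{2})\|_{L^{p}(G_{1}^{\square}/\Gamma^{\square})}\lesssim\|X^{\alpha}F_{1}\|_{L^{2p}(G_{1}/\Gamma_{1})}\|X^{\beta}F_{2}\|_{L^{2p}(G_{1}/\Gamma_{1})}.
\]
Summing over all Leibniz terms and all words of length $\leq j$ bounds the restricted $W^{j,p}$-norm by $\|F_{1}\|_{W^{j,2p}(G/\Gamma)}\|F_{2}\|_{W^{j,2p}(G/\Gamma)}$, and combining this with the first two steps produces \eqref{eq:estimate-discrete-derivative} with implicit constants independent of $k$ and of $F$.
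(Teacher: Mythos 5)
Your proposal is correct and follows essentially the same route as the paper: identify $\|\tilde F_{k}\|_{W^{j,p}(\tilde G/\tilde\Gamma)}$ with the norm of the tensor product on $G_{1}^{\square}/\Gamma^{\square}$, use that the coordinate projections push the Haar measure of $G_{1}^{\square}/\Gamma^{\square}$ to the Haar measure of $G/\Gamma$ (the paper's ``self-joining'' observation, your measure-preserving projections), decouple the two factors by Cauchy--Schwarz/H\"older --- which is exactly where the passage from $p$ to $2p$ occurs --- control the translations by $\{g(k)\},\{g(0)\}$ uniformly via compactness of the fundamental domain, and iterate with the Leibniz rule for the derivatives. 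Your write-up merely spells out the details (the adjoint-action computation for translated right-invariant vector fields, the fiber structure of the projections) that the paper leaves implicit.
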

\begin{proof}
For the Mal'cev basis on $\tilde G/\tilde\Gamma$ that is induced by the Mal'cev basis on $G_{1}^{\square}/\Gamma^{\square}$ we have
\[
\|\tilde F_{k}\|_{W^{j,p}(\tilde G/\tilde\Gamma)} = \|\{g(k)\}F\otimes\{g(0)\}\ol{F}\|_{W^{j,p}(G_{1}^{\square}/\Gamma^{\square})},
\]
so it suffices to estimate the latter quantity.

For this end observe that the Haar measure on $G_{1}^{\square}/\Gamma^{\square}$ is a self-joining of the Haar measure on $G/\Gamma$ under the canonical projections to the coordinates.
Therefore and by the Cauchy-Schwarz inequality we have
\begin{align*}
\|F_{0}\otimes F_{1}\|_{L^p(G_{1}^{\square}/\Gamma^{\square})}^{2p}
&= \Big(\int_{G_{1}^{\square}/\Gamma^{\square}}|F_{0}(y_{0}) F_{1}(y_{1})|^{p} \dif\mu_{G_{1}^{\square}/\Gamma^{\square}}(y_{0},y_{1})\Big)^2\\
&\leq \int_{G_{1}^{\square}/\Gamma^{\square}} |F_{0}(y_{0})|^{2p} \dif\mu_{G_{1}^{\square}/\Gamma^{\square}}(y_{0},y_{1}) \int_{G_{1}^{\square}/\Gamma^{\square}} |F_{1}(y_{1})|^{2p} \dif\mu_{G_{1}^{\square}/\Gamma^{\square}}(y_{0},y_{1})\\
&= \int_{G/\Gamma} |F_{0}|^{2p} \dif\mu_{G/\Gamma}  \int_{G/\Gamma} |F_{1}|^{2p} \dif\mu_{G/\Gamma}
=\|F_{0}\|_{L^{2p}(G/\Gamma)}^{2p} \|F_{1}\|_{L^{2p}(G/\Gamma)}^{2p}
\end{align*}
for any smooth functions $F_{0},F_{1}$ on $G/\Gamma$.
Now recall that $\{g(k)\}\in K$ for some fixed compact set $K\subset G_{1}$, so that by smoothness of the group operation $\|\{g(k)\}F\|_{L^{2p}(G/\Gamma)} \lesssim \|F\|_{L^{2p}(G/\Gamma)}$, and analogously for $\{g(0)\}\ol{F}$.
Similar calculations for the derivatives lead to the bound
\[
\|\{g(k)\}F\otimes\{g(0)\}\ol{F}\|_{W^{j,p}(G_{1}^{\square}/\Gamma^{\square})} \lesssim_{j,p} \|F\|_{W^{j,2p}(G/\Gamma)}^{2}.
\qedhere
\]
\end{proof}

\begin{definition}[Vertical Fourier series]
Let $G/\Gamma$ be a nilmanifold and $\Gb$ be a $\Gamma$-rational filtration on $G$.
For every $F\in L^{2}(G/\Gamma)$ and $\chi\in\widehat{G_{l}/\Gamma_{l}}$ let
\begin{equation}
\label{eq:vert-char-repr}
F_{\chi}(y):=\int_{G_{l}/\Gamma_{l}}F(g_{l}y) \ol{\chi}(g_{l}) \dif g_{l}.
\end{equation}
\end{definition}
With this definition $F_{\chi}$ is
defined almost everywhere
and is a vertical character as witnessed by the character $\chi$.
The usual Fourier inversion formula implies that $F=\sum_{\chi\in\widehat{G_{l}/\Gamma_{l}}}F_{\chi}$ in $L^{2}(G/\Gamma)$.
We further need the following variant of Bessel's inequality.
\begin{lemma}[Bessel-type inequality for vertical Fourier series]\label{lem:bessel}
Let $p\in[2,\infty)$ and $F\in L^{p}(G/\Gamma)$.
Then
\[
\sum_\chi \|F_\chi\|_{L^p(G/\Gamma)}^p\leq \|F\|_{L^p(G/\Gamma)}^p.
\]
\end{lemma}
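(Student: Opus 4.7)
The plan is to disintegrate the Haar measure on $G/\Gamma$ along the vertical torus fibration and apply the Hausdorff--Young inequality on each fiber.

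First I would observe that because $[G_{1},G_{l}]\subseteq G_{l+1}=\{1_{G}\}$ and $G_{0}=G_{1}=G$, the subgroup $G_{l}$ is central in $G$, so left translation by $G_{l}$ descends to an action on $G/\Gamma$ with stabilizer $\Gamma_{l}$. This presents $G/\Gamma$ as a principal $(G_{l}/\Gamma_{l})$-bundle over the lower-step nilmanifold $M:=G_{l}\backslash G/\Gamma$, and the normalized Haar measure disintegrates so that, after choosing a measurable section $\bar y\mapsto y$,
\[
\int_{G/\Gamma}\phi(y)\dif y = \int_{M}\int_{G_{l}/\Gamma_{l}}\phi(g_{l}y)\dif g_{l}\dif\bar y.
\]

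Next, for each base point $\bar y\in M$ with lift $y$ I would consider the fiber restriction $h_{y}(g_{l}):=F(g_{l}y)$, which lies in $L^{p}(G_{l}/\Gamma_{l})$ for a.e.\ $\bar y$ by Fubini. The definition \eqref{eq:vert-char-repr} says precisely that $F_{\chi}(y)=\widehat{h_{y}}(\chi)$ is the Fourier coefficient of $h_{y}$ at $\chi$, and the vertical-character identity $F_{\chi}(g_{l}y)=\chi(g_{l})F_{\chi}(y)$ shows that $|F_{\chi}|$ is constant along fibers. Fubini and normalization of the fiber measure then give
\[
\sum_{\chi}\|F_{\chi}\|_{L^{p}(G/\Gamma)}^{p}=\int_{M}\sum_{\chi}|\widehat{h_{y}}(\chi)|^{p}\dif\bar y,
\qquad
\|F\|_{L^{p}(G/\Gamma)}^{p}=\int_{M}\|h_{y}\|_{L^{p}(G_{l}/\Gamma_{l})}^{p}\dif\bar y.
\]

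It then suffices to prove the pointwise bound $\sum_{\chi}|\widehat{h_{y}}(\chi)|^{p}\leq\|h_{y}\|_{L^{p}(G_{l}/\Gamma_{l})}^{p}$ for a.e.\ $\bar y$. The Hausdorff--Young inequality on the compact abelian group $G_{l}/\Gamma_{l}$, applied with conjugate exponents $(p',p)$ where $p':=p/(p-1)\in[1,2]$, yields $\|\widehat{h}\|_{\ell^{p}}\leq\|h\|_{L^{p'}}$. Since $p\geq 2\geq p'$ and the Haar measure on $G_{l}/\Gamma_{l}$ is a probability measure, nesting of $L^{q}$-norms upgrades this to $\|\widehat{h}\|_{\ell^{p}}\leq\|h\|_{L^{p}}$. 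Raising to the $p$-th power and integrating over $M$ finishes the argument.

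I do not anticipate a serious obstacle: the geometric step is a routine Fubini/disintegration, and the analytic content is classical Hausdorff--Young together with the monotonicity $\|\cdot\|_{L^{p'}}\leq\|\cdot\|_{L^{p}}$ on a probability space. The mildly delicate point is to remember that the inequality in the non-trivial direction comes from Hausdorff--Young and not from Parseval (except at $p=2$, where the whole argument collapses to fiberwise Parseval and the inequality becomes an equality); for $p>2$ the inequality is in general strict, reflecting that Hausdorff--Young is sharp only at the $L^{2}$ endpoint.
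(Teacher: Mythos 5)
Your argument is correct. It shares the paper's overall fiberwise strategy --- both proofs reduce the statement to an inequality between the $\ell^{p}$ norm of the Fourier coefficients of the fiber restriction $h_{y}(g_{l})=F(g_{l}y)$ and its $L^{p}(G_{l}/\Gamma_{l})$ norm, using that $|F_{\chi}|$ is fiberwise constant --- but the analytic input differs. You invoke the Hausdorff--Young inequality $\|\widehat{h}\|_{\ell^{p}}\leq\|h\|_{L^{p'}}$ and then the nesting of $L^{q}$ norms on a probability space. The paper instead performs the fiberwise estimate by hand: it writes $|F_{\chi}(h)|^{p}=|F_{\chi}(h)|^{2}\,|F_{\chi}(h)|^{p-2}$, bounds the single coefficient by Cauchy--Schwarz, $|F_{\chi}(h)|^{2}\leq\int_{G_{l}/\Gamma_{l}}|F(hh_{l})|^{2}\dif h_{l}$, sums over $\chi$ using Plancherel on the fiber, and finishes with H\"older. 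That computation amounts to the chain $\|\widehat{h}\|_{\ell^{p}}\leq\|\widehat{h}\|_{\ell^{2}}=\|h\|_{L^{2}}\leq\|h\|_{L^{p}}$, which needs only Plancherel and the inclusion $\ell^{2}\hookrightarrow\ell^{p}$, not the Riesz--Thorin interpolation underlying Hausdorff--Young; your intermediate bound $\|\widehat{h}\|_{\ell^{p}}\leq\|h\|_{L^{p'}}$ is in fact stronger than what is needed. A further cosmetic difference: the paper never chooses a measurable section of the bundle, working instead with the identity $\int_{G/\Gamma}\int_{G_{l}/\Gamma_{l}}\phi(hh_{l})\dif h_{l}\dif h=\int_{G/\Gamma}\phi$, which sidesteps the (harmless, but extra) measurability point in your disintegration step. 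Both routes are valid; the paper's is slightly more self-contained, yours is slightly quicker to state if one takes Hausdorff--Young for granted.
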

Note that the analogue for $p=\infty$ follows immediately from \eqref{eq:vert-char-repr}.
\begin{proof}
Since vertical characters have constant absolute value on $G_l/\Gamma_{l}$-fibers, we have by \eqref{eq:vert-char-repr} and the Cauchy-Schwarz inequality
\begin{align*}
\|F_\chi\|_{L^p(G/\Gamma)}^p
&=
\int_{G/\Gamma} \int_{G_l/\Gamma_{l}} |F_\chi(h h_l)|^2 \dif h_l \cdot |F_\chi(h)|^{p-2}\, \dif h\\
&\leq \int_{G/\Gamma} \int_{G_l/\Gamma_{l}}   |F_\chi(hh_l)|^2 \dif h_l  \Big(\int_{G_l/\Gamma_{l}} |F(hh_l)|^2 \dif h_l\Big)^{p/2 -1} \dif h
\end{align*}
for every $\chi$.
By the Plancherel identity and H\"older's inequality this implies
\begin{align*}
\sum_{\chi} \|F_{\chi}\|_{L^p(G/\Gamma)}^p
&\leq \int_{G/\Gamma} \Big( \int_{G_l/\Gamma_{l}} |F(hh_l)|^2 \dif h_l\Big)^{p/2} \dif h \\
&\leq \int_{G/\Gamma} \int_{G_l/\Gamma_{l}} |F(hh_l)|^p \dif h_l \dif h
= \|F\|_{L^p(G/\Gamma)}^p,
\end{align*}
finishing the proof.
\end{proof}
It is worth mentioning that there is also a Plancherel-type identity
\[
\sum_\chi \|F_\chi\|_{U^{l}(G/\Gamma)}^{2^{l}}=\|F\|_{U^{l}(G/\Gamma)}^{2^{l}}
\]
for Gowers-Host-Kra norms and vertical Fourier series, see Eisner, Tao \cite{MR2944094}*{Lemma 10.2} for the case $l=3$.

\begin{lemma}[Control on Sobolev norms in a vertical Fourier series]
\label{lem:estimate-vertical-fourier-series}
Let $j\in\N$ and $p\in[2,\infty)$.
For every smooth function $F$ on $G/\Gamma$ we have
\[
\sum_\chi \|F_\chi\|_{W^{j,p}(G/\Gamma)}\lesssim_{j,p} \|F\|_{W^{j+d_{l},p}(G/\Gamma)}.
\]
\end{lemma}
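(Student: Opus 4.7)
The plan is to trade $d_l$ of the $j+d_l$ available derivatives for polynomial decay in the vertical frequency, then conclude by combining Hölder's inequality with the Bessel bound of Lemma~\ref{lem:bessel}. First I would parametrize $\widehat{G_l/\Gamma_l}$ by $\xi\in\Z^{d_l}$ using the dual of the vertical portion $X_{d-d_l+1},\dots,X_d$ of the Mal'cev basis. Directly from the defining property of a vertical character, $F_\chi(\exp(tX_{d-d_l+k})y) = \chi(\exp(tX_{d-d_l+k})\Gamma_l)F_\chi(y) = e^{2\pi i t\xi_k}F_\chi(y)$, I obtain
\[
X_{d-d_l+k}F_\chi = 2\pi i\xi_k F_\chi, \qquad k=1,\dots,d_l.
\]

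The second ingredient is that the projection $F\mapsto F_\chi$ commutes with every right-invariant vector field $X_b$. This is the first place where I expect to do nontrivial work: interchanging $\frac{d}{dt}\big|_{t=0}$ with the integral defining $F_\chi$ requires $\exp(tX_b)g_l = g_l\exp(tX_b)$, which is exactly centrality of $G_l$ in $G$ and follows from the filtration identity $[G_l,G_1]\subset G_{l+1}=\{1_G\}$ (the same identity also shows that $G_l$ is abelian, so the vertical fields commute with one another). Using $(X_bF)_\chi = X_bF_\chi$ and the first display, I apply a vertical multi-index of total order $\leq d_l$ to $F_\chi$ and pull out a factor $\prod_k (2\pi i\xi_k)^{m_k}$. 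Summing over such multi-indices and using the elementary bound $(1+|\xi|)^{d_l}\lesssim_{d_l}\sum_{|m|\leq d_l}|\xi_1|^{m_1}\cdots|\xi_{d_l}|^{m_{d_l}}$ then yields
\[
(1+|\xi|)^{d_l}\|F_\chi\|_{W^{j,p}(G/\Gamma)} \lesssim_{j,p} \|F_\chi\|_{W^{j+d_l,p}(G/\Gamma)}.
\]

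Finally I would apply Hölder's inequality to the sum over $\chi\leftrightarrow\xi\in\Z^{d_l}$ with weights $(1+|\xi|)^{d_l}$ and $(1+|\xi|)^{-d_l}$:
\[
\sum_\chi\|F_\chi\|_{W^{j,p}} \leq \Big(\sum_{\xi\in\Z^{d_l}}(1+|\xi|)^{-d_lp'}\Big)^{1/p'}\Big(\sum_\chi \bigl((1+|\xi|)^{d_l}\|F_\chi\|_{W^{j,p}}\bigr)^p\Big)^{1/p}.
\]
The first factor is finite precisely because $p<\infty$ forces $p'>1$ and hence $d_lp'>d_l$; this is the only place the hypothesis $p<\infty$ enters. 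For the second factor I use the preceding display together with $X^\beta F_\chi = (X^\beta F)_\chi$ to bound it by $\sum_\chi\sum_{|\beta|\leq j+d_l}\|(X^\beta F)_\chi\|_{L^p}^p$, swap the two sums (the $\beta$-sum being finite), and apply Lemma~\ref{lem:bessel} to each fixed $\beta$ to get $\sum_\chi\|F_\chi\|_{W^{j+d_l,p}}^p\leq \|F\|_{W^{j+d_l,p}}^p$. The step I expect to be the main obstacle is the commutation $(X_bF)_\chi = X_bF_\chi$ for non-vertical $X_b$; once centrality of $G_l$ is invoked the rest is a routine marriage of Fourier analysis on the vertical torus with the Bessel estimate at hand.
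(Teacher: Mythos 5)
Your proof is correct and follows essentially the same route as the paper: eigenvalue relations for vertical derivatives of $F_\chi$, commutation of the projection $F\mapsto F_\chi$ with the Mal'cev vector fields via centrality of $G_l$, H\"older in the frequency variable with a summable weight, and Lemma~\ref{lem:bessel} applied derivative by derivative. The only cosmetic difference is your uniform weight $(1+|\xi|)^{-d_l}$ versus the paper's $|m_1\cdots m_{d_l}|^{-1}$ with a separate treatment of vanishing indices; both work, and your version avoids that case split.
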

\begin{proof}
The compact abelian Lie group $G_{l}/\Gamma_{l}$ is isomorphic to a product of a torus and a finite group.
In order to keep notation simple we will consider the case $G_{l}/\Gamma_{l} \cong \T^{d_{l}}$, the conclusion for disconnected $G_{l}/\Gamma_{l}$ follows easily from the connected case.
We rescale the last $d_{l}$ elements of the Mal'cev basis in such a way that they correspond to the unit tangential vectors at the origin of the torus $\T^{d_{l}}$.
The characters on $G_{l}/\Gamma_{l}$ are then given by $\chi_{\m}(z_{1},\dots,z_{m})=z_{1}^{m_{1}}\cdot\dots\cdot z_{d_{l}}^{m_{d_{l}}}$ with $\m=(m_{1},\dots,m_{d_{l}})\in\Z^{d_{l}}$.
Observe that by (\ref{eq:vert-char-repr}) and the commutativity of $G_l$ we have $(\partial_{i} F)_\m=\partial_{i}(F_\m)=m_iF_\m$ for every $i$ and $\m$, where $\partial_{i}$ denotes the derivative along the $i$-th coordinate in $\T^{d_{l}}$.
Therefore, by H\"older's inequality and Lemma~\ref{lem:bessel}
\begin{align*}
\Big(\sum_{m_1, \ldots, m_{d_l}\neq 0}\|F_{\chi_{\m}} \|_{L^{p}}\Big)^{p}
&= \Big(\sum_{m_1, \ldots, m_{d_l}\neq 0} \frac{1}{|m_1\cdots m_{d_l}|}\|m_1\cdots m_{d_l} F_{\chi_{\m}} \|_{L^{p}}\Big)^{p}\\
&\leq \Big(\sum_{m_1, \ldots, m_{d_l}\neq 0} \Big|\frac{1}{m_1\cdots m_{d_l}}\Big|^{p/(p-1)} \Big)^{p-1} \sum_\m \|m_1\cdots m_{d_l}  F_{\chi_{\m}} \|_{L^{p}}^{p} \\
&\lesssim \sum_{\m} \|\partial_{1}\ldots \partial_{d_l} F_{\chi_{\m}}\|_{L^{p}}^{p}
\leq \|\partial_{1}\ldots \partial_{d_l} F\|_{L^{p}}^{p}
\leq \|F\|_{W^{d_l,p}}^{p}.
\end{align*}
By the centrality of $G_{l}$ the operations of taking derivatives along elements of the Mal'cev basis and taking the $\chi$-th vertical character \eqref{eq:vert-char-repr} commute, so we have
\[
\sum_{m_1, \ldots, m_{d_l}\neq 0}\|F_{\chi_{\m}}\|_{W^{j,p}}
\lesssim \|F\|_{W^{j+d_l,p}}
\]
for every $j\in \N$.
The same argument works if some of the indices $(m_1,\ldots,m_{d_l})$ vanish, in which case a smaller number of derivatives is added to $j$, and thus altogether
\[
\sum_\m\|F_{\chi_{\m}}\|_{W^{j,p}}
\lesssim \|F\|_{W^{j+d_l,p}}.
\qedhere
\]
\end{proof}

We will need an estimate on the $L^{\infty}$ norm of a vertical character in terms of a Sobolev norm with minimal smoothness requirements.
For this end we would like to use a Sobolev embedding theorem on $G/\Gamma G_{l}$ since this manifold has lower dimension than $G/\Gamma$.
Morally, a vertical character is a function on the base space $G/\Gamma G_{l}$ that is extended to the principal $G_{l}/\Gamma_{l}$-bundle $G/\Gamma$ in a multiplicative fashion.
However, in general this bundle lacks a global cross-section, so we are forced to work locally.
\begin{lemma}[Sobolev embedding]\label{lem:sobolev-embedding}
Let $G/\Gamma$ be a nilmanifold and $\Gb$ a $\Gamma$-rational filtration of length $l$ on $G$.
Then for every $1\leq p\leq\infty$ and every vertical character $F\in W^{d-d_{l},p}(G/\Gamma)$ we have
\[
\|F\|_{\infty} \lesssim_{p} \|F\|_{W^{d-d_{l},p}},
\]
where the implied constant does not depend on $F$.
\end{lemma}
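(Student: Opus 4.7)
The key observation is that for a vertical character $F$, the modulus $|F|$ is constant along each $G_{l}/\Gamma_{l}$-fiber of the principal bundle $\pi\from G/\Gamma\to G/\Gamma G_{l}$, because $F(g_{l}y)=\chi(g_{l}\Gamma_{l})F(y)$ with $|\chi|\equiv 1$. The base $G/\Gamma G_{l}$ has dimension $d-d_{l}$, which is precisely the number of derivatives required for the classical Sobolev embedding on a $(d-d_{l})$-dimensional bounded Lipschitz domain to embed $W^{d-d_{l},p}$ into $L^{\infty}$ for every $p\in[1,\infty]$ (at $p=1$ this is the endpoint Gagliardo-Nirenberg inequality). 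The plan is therefore to reduce to the classical Euclidean embedding via local trivialisations of $\pi$.

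First, I would cover the compact base $G/\Gamma G_{l}$ by finitely many relatively compact open sets $U_{1},\dots,U_{M}$, each identified with an open subset of $\R^{d-d_{l}}$ via horizontal Mal'cev coordinates, and on each $U_{j}$ pick a smooth local section $\sigma_{j}\from U_{j}\to G/\Gamma$ of $\pi$; such sections exist since $\pi$ is a smooth principal bundle, only a global section being obstructed in general. Fiber-constancy of $|F|$ gives
\[
\|F\|_{\infty}=\max_{j}\sup_{u\in U_{j}}|F(\sigma_{j}(u))|,
\]
and the classical Sobolev embedding on each bounded Lipschitz $U_{j}\subset\R^{d-d_{l}}$ yields
\[
\sup_{U_{j}}|F\circ\sigma_{j}|\lesssim_{p}\|F\circ\sigma_{j}\|_{W^{d-d_{l},p}(U_{j})}.
\]

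It remains to bound the right-hand side by $\|F\|_{W^{d-d_{l},p}(G/\Gamma)}$. By the chain rule applied iteratively, each Euclidean partial derivative $\partial^{\alpha}(F\circ\sigma_{j})$ of order $|\alpha|\leq d-d_{l}$ is a linear combination of terms $(X_{b_{1}}\cdots X_{b_{a}}F)\circ\sigma_{j}$ with $a\leq|\alpha|$, where the coefficients are smooth and uniformly bounded on the relatively compact $U_{j}$ (because the Mal'cev basis $X_{1},\dots,X_{d}$ frames the tangent bundle of $G/\Gamma$ and $\sigma_{j}$ is smooth). Since $\sigma_{j}$ is a smooth embedding with bounded Jacobian, one has $\|(X_{b_{1}}\cdots X_{b_{a}}F)\circ\sigma_{j}\|_{L^{p}(U_{j})}\lesssim\|X_{b_{1}}\cdots X_{b_{a}}F\|_{L^{p}(G/\Gamma)}$, and summing yields the desired inequality; an approximation by smooth functions extends the estimate to general $F\in W^{d-d_{l},p}(G/\Gamma)$.

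The only mildly delicate points I anticipate are the endpoint case $p=1$ of the classical Sobolev embedding and the absence of a global section of $\pi$, which forces the finite cover. Neither is a substantial obstacle: the endpoint inequality is standard, and since the cover, the sections, and the Mal'cev frame are fixed with the filtration, all constants are uniform in $F$ and depend only on $p$ and the geometric data, as claimed.
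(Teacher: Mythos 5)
Your overall strategy --- exploit fiber-constancy of $|F|$ to reduce to the classical Sobolev embedding on the $(d-d_{l})$-dimensional base --- is the same as the paper's, and the endpoint $p=1$ is indeed unproblematic. But there is a genuine gap in the step where you pass from $\|F\circ\sigma_{j}\|_{W^{d-d_{l},p}(U_{j})}$ to $\|F\|_{W^{d-d_{l},p}(G/\Gamma)}$. The inequality
\[
\|(X_{b_{1}}\cdots X_{b_{a}}F)\circ\sigma_{j}\|_{L^{p}(U_{j})}\lesssim\|X_{b_{1}}\cdots X_{b_{a}}F\|_{L^{p}(G/\Gamma)}
\]
is false for general functions: the image $\sigma_{j}(U_{j})$ is a $(d-d_{l})$-dimensional submanifold of the $d$-dimensional manifold $G/\Gamma$, hence a null set, so the restriction of an $L^{p}$ function to it is not even well defined, let alone controlled by the global $L^{p}$ norm. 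A ``bounded Jacobian'' change of variables only works between sets of equal dimension; what you need here is a trace estimate, which costs $1/p$ derivatives per codimension and is certainly unavailable at the top order $a=d-d_{l}$, where you have no derivatives left to spend.

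The repair is precisely the point where the vertical-character hypothesis must be used a second time, not just for $|F|$ but for all its derivatives. Since $[G_{1},G_{l}]\subset G_{l+1}=\{1_{G}\}$, the group $G_{l}$ is central, so left translation by $g_{l}\in G_{l}$ commutes with the right-invariant vector fields $X_{b}$; hence each $X_{b_{1}}\cdots X_{b_{a}}F$ is again a vertical character with the same $\chi$ and has fiber-constant modulus. Averaging over the fiber direction then converts the restriction to a single section into a genuine $d$-dimensional integral. This is exactly how the paper proceeds: it builds equivariant charts $\Psi\from V\times W\to G/\Gamma$ with $\Psi(g_{l}v,w)=g_{l}\Psi(v,w)$, observes that the slice norms $\|F\circ\Psi\|_{W^{d-d_{l},p}(\{v\}\times W)}$ are therefore constant in $v$, and bounds this constant by its average over $v\in V$, which is $\lesssim\|F\|_{W^{d-d_{l},p}(\im\Psi)}$. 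Without this fiber-averaging (or an equivalent use of the vertical-character property of the derivatives), your restriction step does not go through.
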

\begin{proof}
The case $p=\infty$ is clear, so we may assume $p<\infty$.

Since $\Gamma$ is discrete there exists a neighborhood $U \subset G$ of the identity such that the quotient map $U\to G/\Gamma$ is a diffeomorphism onto its image.
Let $M\subset G$ be a $(d-d_{l})$-dimensional submanifold that intersects $G_{l}$ in $e_{G}$ transversely.
By joint continuity of multiplication in $G$ we may find neighborhoods of identity $V\subset G_{l}$ and $W\subset M$ such that $VW\subset U$.
By transversality the differential of the map $\psi:V\times W \to G$, $(v,w)\mapsto vw$ is invertible at $(e_{G},e_{G})$, so by the inverse function theorem and shrinking $V,W$ if necessary we may assume that $\psi$ is a diffeomorphism onto its image.
We may also assume that $V,W$ are connected, simply connected and have smooth boundaries.
Recalling that the quotient map $U\to G/\Gamma$ is a diffeomorphism, we obtain a chart $\Psi:V\times W\to G/\Gamma$ for a neighborhood of $e_{G}\Gamma$ that has the additional property that $\Psi(g_{l}v,w)=g_{l}\Psi(v,w)$ whenever $v,g_{l}v\in V$.
Shrinking $V$ and $W$ further if necessary we may assume that the differential of $\Psi$ and its inverse are uniformly bounded.
By homogeneity we obtain similar charts for some neighborhoods of all points of $G/\Gamma$.
By compactness $G/\Gamma$ can be covered by finitely many such charts, so it suffices to estimate $\|F\|_{L^{\infty}(\im\Psi)}$ in terms of $\|F\|_{W^{d-d_{l},p}(\im\Psi)}$.

By definition of Sobolev norms we have
\[
\int_{v\in V} \|F\circ\Psi\|_{W^{d-d_{l},p}(\{v\}\times W)}^{p} \dif v \lesssim \|F\circ\Psi\|_{W^{d-d_{l},p}(V\times W)}^{p} \lesssim \|F\|_{W^{d-d_{l},p}(\im\Psi)}^{p}.
\]
Since $F$ is a vertical character and by multiplicativity of $\Psi$ in the first argument, the integrand on the left-hand side is constant, so that
\[
\|F\circ\Psi\|_{W^{d-d_{l},p}(\{v\}\times W)} \lesssim \|F\|_{W^{d-d_{l},p}(\im\Psi)} \text{ for all } v\in V,
\]
the bound being independent of $v$.
Now, $W$ is a $d-d_{l}$ dimensional manifold, so the usual Sobolev embedding theorem \cite{MR2424078}*{Theorem 4.12 Part I Case A} applies and we obtain
\[
\|F\circ\Psi\|_{L^{\infty}(\{v\}\times W)} \lesssim \|F\circ\Psi\|_{W^{d-d_{l},p}(\{v\}\times W)} \lesssim \|F\|_{W^{d-d_{l},p}(\im\Psi)}^{p}.
\]
By the above discussion this implies the desired estimate.
\end{proof}

\section{The main estimate}\label{sec:estimate}
In this section we deal with our main problem of estimation of averages in \eqref{eq:ave-uniform}.
The general strategy is to decompose $F$ into a vertical Fourier series, to use the quantitative van der Corput estimate and to control various norms that appear during this procedure using the results of the previous section.
In several places
in our argument we will need convergence of Birkhoff averages of a function to its integral.
In order to ensure this convergence we restrict attention to fully generic points.

The following uniform estimate is our main result.
\begin{theorem}[Uniformity seminorms control averages uniformly]\label{thm:main}
Assume that $(X,\mu,T)$ is ergodic.
Then for every $f\in L^\infty(X)$ and every point $x$ that is fully generic for $f$ with respect to $(\Phi_N)$ the following holds.
For every $l\in\N$ and $\epsilon>0$ there exists $N_{0}$ such that for every nilmanifold $G/\Gamma$ with a $\Gamma$-rational filtration $\Gb$ on $G$ of length $l$, every smooth function $F$ on $G/\Gamma$ and every $g\in\poly$ we have
\begin{equation}\label{eq:control-by-uniformity-norms}
\forall N\geq N_{0}
\quad
\Big| \aveFn f(T^nx) F(g(n)\Gamma) \Big|
\lesssim \|F\|_{W^{k,2^l}(G/\Gamma)} (\|f\|_{U^{l+1}(X)} + \epsilon),
\end{equation}
where $k=\sum_{r=1}^{l}(d_{r}-d_{r+1})\binom{l}{r-1}$ and the implied constant depends only on the nilmanifold $G/\Gamma$, filtration $\Gb$ and the Mal'cev basis that is implicit in the definition of $\Gamma$-rationality.

If in addition $(X,T)$ is uniquely ergodic and $f\in C(X)$ then the conclusion holds for every $x\in X$ and $N_{0}$ can be chosen independently of $x$.
\end{theorem}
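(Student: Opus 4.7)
The proof proceeds by induction on $l$. For $l=0$ the filtration collapses to $G_{0}=G_{1}=\{1_{G}\}$, so $G/\Gamma$ is a point, every $F$ and $g$ is constant, and the claim reduces to $|F|\bigl|\aveFn f(T^{n}x)\bigr|\leq |F|(|\int f\,\dif\mu|+\epsilon) = \|F\|_{W^{0,1}}(\|f\|_{U^{1}(X)}+\epsilon)$, which holds for $N$ large by genericity of $x$ (uniformly in $x$ when $f\in C(X)$ and $(X,T)$ is uniquely ergodic). Note $k(0)=0$, as required.

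\textbf{Inductive step.} Assume the conclusion for $l-1$. Given $\Gb$ of length $l$, decompose
\[
F=\sum_{\chi\in\widehat{G_{l}/\Gamma_{l}}}F_{\chi}
\]
into its vertical Fourier series and write $A_{N}^{(\chi)}(x):=\aveFn f(T^{n}x)F_{\chi}(g(n)\Gamma)$. Apply Lemma~\ref{VdC} to the scalar sequence $u_{n}=f(T^{n}x)F_{\chi}(g(n)\Gamma)$, giving
\[
|A_{N}^{(\chi)}(x)|^{2}\leq \Bigl|\frac{2}{K^{2}}\sum_{|k|\leq K}(K-|k|)\aveFn u_{n}\overline{u_{n+k}}\Bigr| + \|F_{\chi}\|_{\infty}^{2}\|f\|_{\infty}^{2}\,o_{K}(1).
\]
Writing $f_{k}:=f\cdot\overline{T^{k}f}$, the cube construction developed before Lemma~\ref{lem:sobolev-norm-of-tensor-product} gives $u_{n}\overline{u_{n+k}}=f_{k}(T^{n}x)\,\overline{\tilde F_{k}(\tilde g_{k}(n)\tilde\Gamma)}$, where $\tilde F_{k}$ is smooth on $\tilde G/\tilde\Gamma$ and $\tilde g_{k}\in\poly[\tilde\Gb]$, with $\tilde\Gb$ a $\tilde\Gamma$-rational filtration of length $l-1$ by Lemma~\ref{lem:rational-cube}. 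Since $f_{k}$ lies in the $T$-invariant subalgebra generated by $f$, $x$ is fully generic for each $f_{k}$, so the inductive hypothesis applies to each of the finitely many inner averages (with tolerance $\epsilon$ and some $N_{0}$ depending on $K$), yielding
\[
\bigl|\aveFn f_{k}(T^{n}x)\overline{\tilde F_{k}(\tilde g_{k}(n)\tilde\Gamma)}\bigr|\lesssim \|\tilde F_{k}\|_{W^{k(l-1),2^{l-1}}(\tilde G/\tilde\Gamma)}(\|f_{k}\|_{U^{l}(X)}+\epsilon).
\]
Lemma~\ref{lem:sobolev-norm-of-tensor-product} converts the right-hand side into $\|F_{\chi}\|^{2}_{W^{k(l-1),2^{l}}(G/\Gamma)}(\|f_{k}\|_{U^{l}(X)}+\epsilon)$.

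\textbf{Averaging in $k$ and summing over $\chi$.} Inserting this into van der Corput and applying Jensen's inequality with the convex function $t\mapsto t^{2^{l}}$ together with the smoothed formula \eqref{eq:uniformity-seminorms-smoothed} gives
\[
\frac{1}{K^{2}}\sum_{|k|\leq K}(K-|k|)\|f_{k}\|_{U^{l}(X)}\leq \Bigl(\|f\|_{U^{l+1}(X)}^{2^{l+1}}+o_{K\to\infty}(1)\Bigr)^{1/2^{l}}\leq \|f\|_{U^{l+1}(X)}^{2}+o_{K\to\infty}(1),
\]
so that, taking square roots,
\[
|A_{N}^{(\chi)}(x)|\lesssim \|F_{\chi}\|_{W^{k(l-1),2^{l}}(G/\Gamma)}(\|f\|_{U^{l+1}(X)}+\delta) + \|F_{\chi}\|_{\infty}\|f\|_{\infty}o_{K}(1)^{1/2}.
\]
Summing over $\chi$ and invoking Lemma~\ref{lem:estimate-vertical-fourier-series} (which raises the number of derivatives by $d_{l}$) together with Lemma~\ref{lem:sobolev-embedding} (to control $\sum_{\chi}\|F_{\chi}\|_{\infty}$ by $\|F\|_{W^{d,2^{l}}}$, which is dominated by $\|F\|_{W^{k(l),2^{l}}}$ since $k(l)\geq d$) yields a bound of the form $\|F\|_{W^{k(l-1)+d_{l},2^{l}}(G/\Gamma)}(\|f\|_{U^{l+1}(X)}+\delta)$. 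Choosing $K$ large enough and then $N_{0}$ large enough (depending on $K$, on $x$, and on the finitely many $f_{k}$, $|k|\leq K$) closes the induction, provided the combinatorial identity
\[
k(l-1)+d_{l}=k(l),\qquad \tilde d_{r}=d_{r}+d_{r+1}-d_{l},
\]
holds; this follows from Pascal's rule $\binom{l}{r-1}-\binom{l}{r-2}=\binom{l-1}{r-1}-\binom{l-1}{r-3}$. In the uniquely ergodic case the same scheme applies with every convergence statement uniform in $x\in X$, so $N_{0}$ may be chosen independently of $x$.

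\textbf{Main obstacle.} The delicate point is juggling the three different sources of lost smoothness that appear in the same induction step: the cube construction doubles the Lebesgue exponent from $2^{l-1}$ to $2^{l}$ but preserves the number of derivatives (Lemma~\ref{lem:sobolev-norm-of-tensor-product}), the vertical Fourier expansion costs $d_{l}$ extra derivatives (Lemma~\ref{lem:estimate-vertical-fourier-series}), and the tail control requires a Sobolev embedding on fibres of the vertical torus (Lemma~\ref{lem:sobolev-embedding}). The Pascal-type identity above is precisely the bookkeeping that ensures the three losses combine to the stated $k(l)$; without this verification the induction would not close with a uniform constant.
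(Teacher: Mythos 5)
Your proof is correct and follows essentially the same route as the paper's: induction on the filtration length, vertical Fourier decomposition, van der Corput combined with the cube construction to drop to step $l-1$, and Lemmas~\ref{lem:sobolev-norm-of-tensor-product}, \ref{lem:estimate-vertical-fourier-series} and \ref{lem:sobolev-embedding} together with \eqref{eq:uniformity-seminorms-smoothed} to close the induction, with the same bookkeeping $\tilde k + d_{l} = k$. The only (immaterial) differences are that you decompose into vertical characters before applying van der Corput rather than treating the vertical-character case first, and you invoke Jensen where the paper uses Cauchy--Schwarz for the power-mean step.
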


Example~\ref{ex:assani} below shows that there is in general no constant $C$ such that the estimate
\begin{equation}\label{eq:assani}
\limsup_{N\to\infty}\Big|\aveN f(T^nx)F(S^ny)\Big|\leq C \|F\|_\infty \|f\|_{U^2(X)}
\end{equation}
holds for every $1$-step basic nilsequence $F(S^ny)$, even without uniformity.
Thus one cannot expect to replace the Sobolev norm by $\|F\|_\infty$ in Theorem~\ref{thm:main}.

\begin{remark}\label{remark-hk-ww}
Quantifying the proof of Host, Kra \cite{MR2150389}*{Proposition 5.6} using standard Fourier analysis on $\T^{d(2^l-1)}$ one obtains for the non-uniform averages the upper bound
\[
\limsup_{N} \Big| \aveFn f(T^{n}x) F(g(n)\Gamma) \Big| \lesssim \|F\|_{W^{d(2^l-1),2}(G/\Gamma)} \|f\|_{U^{l+1}(X)}
\]
for ``linear'' sequences $g(n)=h^{n}h'$, where the implied constant depends on geometric data like the choice of a decomposition of identity on the pointed cube space $(G/\Gamma)^{[k]}_*=(G/\Gamma)^{2^l-1}$.
Note also that Host and Kra worked with intervals with growing length instead of tempered F\o{}lner sequences in $\Z$.
\end{remark}

\begin{proof}[Proof of Theorem~\ref{thm:main}]
We argue by induction on $l$.
In the case $l=0$ the group $G$ is trivial, so $\|F\|_{\infty} = \|F\|_{W^{0,1}(G/\Gamma)}$ and the claim follows by the definition of generic points.
We now assume that the claim holds for $l-1$ and show that it holds for $l$.
Write $a_n:=F(g(n)\Gamma)$.

Assume first that $F$ is a vertical character and recall the notation from Section~\ref{sec:vertical}.
Let $\delta>0$ be chosen later.
For the dimensions $(\tilde d_{i})$ of the groups in the filtration $\tilde\Gb$
we have the relations
$\tilde d_{i}-\tilde d_{i+1}=(d_{i}-d_{i+1})+(d_{i+1}-d_{i+2})$, $i=1,\dots,l-1$.
By the induction hypothesis applied to $\tilde{G}/\tilde{\Gamma}$ with the induced $\tilde\Gamma$-rational filtration and Lemma~\ref{lem:sobolev-norm-of-tensor-product} we have
\begin{align*}
\Big| \aveFn (T^kf \bar{f})(T^nx) a_{n+k}{\ol{a_n}}\Big|
&\lesssim \|\tilde{F}_{k}\|_{W^{\tilde k,2^{l-1}}} (\|T^kf \bar{f}\|_{U^{l}(X)} + \delta)\\
&\lesssim \|F\|_{W^{\tilde k,2^{l}}}^2 (\|T^kf \bar{f}\|_{U^{l}(X)} + \delta)
\end{align*}
with $\tilde k = \sum_{r=1}^{l-1}(\tilde d_{r}-\tilde d_{r+1}) \binom{l-1}{r-1} = \sum_{r=1}^{l}(d_{r}-d_{r+1}) \binom{l}{r-1} - d_{l}$
for any integer $k$ provided that $N$ is large enough depending on $l$, $k$, $\delta$ and $x$.
Let $K$ be chosen later.
The van der Corput Lemma~\ref{VdC} implies
\begin{align*}
\Big| \aveFn f(T^nx) a_n\Big|^2
&\leq
\frac2{K^{2}} \sum_{k=-K}^{K}(K-|k|)
\Big| \aveFn (T^kf \bar{f})(T^nx) a_{n+k}{\ol{a_n}}\Big|
+\|F\|_{\infty}^{2}\|f\|_{\infty}^{2}o_{K}(1)\\
&\lesssim
\frac1{K^{2}} \sum_{k=-K}^{K}(K-|k|) \|F\|_{W^{\tilde k,2^{l}}}^2 (\|T^kf \bar{f}\|_{U^{l}(X)} + \delta)
 +\|F\|_{\infty}^{2} o_{K}(1)
\end{align*}
provided that $N$ is large enough depending on $l$, $K$, $\delta$ and $x$.
By Lemma~\ref{lem:sobolev-embedding} this is dominated by
\[
\|F\|_{W^{\tilde k,2^{l}}}^2 \left(\frac1{K^{2}} \sum_{k=-K}^{K}(K-|k|) \|T^kf \bar{f}\|_{U^{l}(X)} + \delta + o_{K}(1)\right).
\]
By the Cauchy-Schwarz inequality this is dominated by
\[
\|F\|_{W^{\tilde k,2^{l}}}^2 \Big( \Big(\frac1{K^{2}} \sum_{k=-K}^{K}(K-|k|) \|T^kf \bar{f}\|_{U^{l}(X)}^{2^{l}}\Big)^{1/2^{l}} + \delta +o_{K}(1) \Big)=:I.
\]
By \eqref{eq:uniformity-seminorms-smoothed} for sufficiently large $K = K(f,\delta)$ the above average over $k$ approximates $\|f\|_{U^{l+1}(X)}^{2}$ to within $\delta$, so we have
\begin{align*}
I &\lesssim
\|F\|_{W^{\tilde k,2^{l}}}^2 (\|f\|_{U^{l+1}(X)}^{2} + 2\delta +o_{K}(1)).
\end{align*}
Taking $\delta=\delta(\epsilon)$ sufficiently small and $N\geq N_{0}(l,f,\epsilon,x)$ sufficiently large we obtain
\[
\Big| \aveFn f(T^nx) a_n\Big|
\lesssim
\|F\|_{W^{\tilde k,2^{l}}} (\|f\|_{U^{l+1}(X)} + \epsilon).
\]
Note that $N_{0}$ does not depend on $F$.

Let now $(a_n)=(F(g(n)\Gamma))$ be an arbitrary $l$-step basic nilsequence on $G/\Gamma$.
Let $F=\sum_\chi F_\chi$ be the vertical Fourier series.
By the above investigation of the vertical character case, since the vertical Fourier series of $F$ converges absolutely and by Lemma~\ref{lem:estimate-vertical-fourier-series} we get
\begin{align*}
\Big| \aveFn f(T^n x) F(g(n)\Gamma) \Big|
&\lesssim
\sum_\chi \|F_\chi\|_{W^{\tilde k,2^l}} (\|f\|_{U^{l+1}(X)} + \epsilon)\\
&\lesssim
\|F\|_{W^{\tilde k+d_{l},2^{l}}} (\|f\|_{U^{l+1}(X)} + \epsilon)
\end{align*}
for $N\geq N_{0}$ as required.

Under the additional assumptions that $(X,T)$ is uniquely ergodic and $f\in C(X)$ we obtain the additional conclusion that the estimate is uniform in $x\in X$ for $l=0$ from uniform convergence of ergodic averages $\aveFn T^{n}f$, see e.g.~\cite{MR648108}*{Theorem 6.19}.
For general $l$ it suffices to observe that in the above proof the dependence of $N_{0}$ on $x$ comes in only through the inductive hypothesis.
Also, there is no need for temperedness of $(\Phi_{N})$ in this case.
\end{proof}

\begin{proof}[Proof of Theorem~\ref{thm:uniform-convergence-to-zero}]
Let $f\in L^{1}(X)$ with $\E(f|\HKZ_{l}(X))=0$ be given.
By truncation we can approximate it by a sequence of bounded functions $(f_{j})\subset L^{\infty}(X)$ such that $f_{j}\to f$ in $L^{1}$.
Replacing each $f_{j}$ by $f_{j}-\E(f_{j}|\HKZ_{l}(X))$ we may assume that $\E(f_{j}|\HKZ_{l}(X))=0$ for every $j$.

By Theorem~\ref{thm:main} we have
\[
\lim_{N\to\infty} \sup_{g\in\poly, F\in W^{k,2^{l}}(G/\Gamma)}
\|F\|_{W^{k,2^{l}}(G/\Gamma)}\inv
\Big| \aveFn f_{j}(T^{n}x)F(g(n)\Gamma) \Big| = 0
\]
for $x$ in a set of full measure and every $j$.
By the Sobolev embedding theorem \cite{MR2424078}*{Theorem 4.12 Part I Case A} we have $\|F\|_{\infty} \lesssim \|F\|_{W^{k,2^{l}}(G/\Gamma)}$ for $F\in W^{k,2^{l}}(G/\Gamma)$.
This shows that
\begin{multline*}
\sup_{g\in\poly, F\in W^{k,2^{l}}(G/\Gamma)}
\|F\|_{W^{k,2^{l}}(G/\Gamma)}\inv
\Big| \aveFn f(T^{n}x)F(g(n)\Gamma) \Big|\\
\lesssim
\aveFn |f-f_{j}|(T^{n}x) +
\sup_{g\in\poly, F\in W^{k,2^{l}}(G/\Gamma)} \|F\|_{W^{k,2^{l}}(G/\Gamma)}\inv \Big| \aveFn f_{j}(T^{n}x)F(g(n)\Gamma) \Big|.
\end{multline*}
Fixing a $j$, restricting to the set of points that are generic for $|f-f_{j}|$ with respect to~$\{\Phi_N\}$ and letting $N\to\infty$ we can estimate the limit by $\|f-f_{j}\|_{1}$ pointwise on a set of full measure.
Hence the limit vanishes a.e.

Under the additional assumptions that $(X,T)$ is uniquely ergodic and $f$ is continuous the uniform convergence \eqref{eq:ave-uniform-in-X} follows directly from Theorem~\ref{thm:main}.
\end{proof}

\section{A counterexample}\label{sec:counterexample}
The following example shows that there is no constant $C$ such that the estimate (\ref{eq:assani})
holds for every $1$-step basic nilsequence $F(S^ny)$. Thus one cannot replace the Sobolev norm by $\|F\|_\infty$ in Theorem \ref{thm:main} even without uniformity in $F$ and $g$.
\begin{example}[I.~Assani]\label{ex:assani}
We begin as in Assani, Presser \cite{MR2901351}*{Remarks} and consider an irrational rotation system $(\T,\mu,T)$ on the unit circle, $f\in C(\T)$, $x\in \T$ and define $S:=T$, $y:=x$ and $F:=\bar{f}$.
We have
\[
\limsup_{N\to\infty}\Big|\aveN f(T^nx)\bar{f}(T^nx) \Big|
= \sum_{k=-\infty}^\infty |\hat{f}(k)|^2=\|f\|_2^2.
\]
By $\|f\|_{U^2(\T)}^4=\sum_{k=-\infty}^\infty |\hat{f}(k)|^4$, the inequality \eqref{eq:assani} takes the form
\begin{equation}\label{eq:assani-2}
\|f\|_2^2
\leq C\|f\|_\infty \Big(\sum_{k=-\infty}^\infty |\hat{f}(k)|^4 \Big)^{1/4}.
\end{equation}
Let now $\{a_n\}_{n=1}^\infty\subset \R$ and consider random polynomials
\[
P_N(t,\omega) :=\sum_{n=1}^N r_n(\omega) a_n \cos(nt),
\]
where $r_n$ are the Rademacher functions taking the values $1$ and $-1$ with equal probability. By Kahane \cite{MR833073}*{pp. 67--69}, there is an absolute constant $D$ such that for every $N$
\[
\mathbb{P}\Big\{\omega:\, \|P_N(\cdot,\omega)\|_\infty \geq D\Big(\sum_{n=1}^N a_n^2 \log N \Big)^{1/2}\Big\}\leq \frac{1}{N^2}.
\]
Therefore for every $N\in\N$ there is $\omega$ (or a choice of signs $+$ or $-$) so that
\[
\|P_N(\cdot,\omega)\|_\infty\leq D\Big(\sum_{n=1}^N a_n^2 \log N \Big)^{1/2}.
\]

Assume now that inequality \eqref{eq:assani-2} holds for some constant $C$ and every $f\in C(\T)$. Then by the above for $f=P_N(\cdot,\omega)$ we have
\[
\sum_{n=1}^N a_n^2
\leq CD(\log N)^{1/2}
\Big(\sum_{n=1}^N a_n^2\Big)^{1/2} \Big(\sum_{n=1}^N a_n^4\Big)^{1/4}
\]
and hence
\[
\sum_{n=1}^N a_n^2 \leq (CD)^2 \|(a_n)\|_{l^4} \log N.
\]
Taking $a_n=\sqrt{\log n/n}$
implies $\sum_{n=1}^N \log n/n\leq \tilde{C} \log N$ for some $\tilde{C}$ and all $N$, a contradiction.
\end{example}
We also refer to Assani \cite{MR2753294} and Assani, Presser \cite{MR2901351} for related issues.

\section{Wiener-Wintner theorem for generalized nilsequences}\label{sec:WW-gen-nilseq}
In view of Theorem \ref{thm:main} the Wiener-Wintner theorem for generalized nilsequences (Theorem~\ref{thm:WW-gen-nilseq}) follows by a limiting argument from a structure theorem for non-ergodic measure preserving systems due to Chu, Frantzikinakis and Host.
\begin{proof}[Proof of Theorem~\ref{thm:WW-gen-nilseq}]
Restricting to the separable $T$-invariant $\sigma$-algebra generated by $f$ 
we may assume that $(X,\mu,T)$ is regular.
Let $\mu = \int \mu_{x} \dif\mu(x)$ be the ergodic decomposition.

Consider first a function $0\leq f\leq 1$ and let $\tilde f:=\E(f|\HKZ_{l}(X))$.
By \cite{MR2795725}*{Proposition 3.1} we obtain a sequence of functions $(f_{j}) \subset L^{\infty}(X)$ such that the following holds.
\begin{enumerate}
\item We have $\|f_{j}\|_{L^{\infty}(X,\mu)} \leq 1$ and $\|\tilde f-f_{j}\|_{L^{1}(X,\mu)} \to 0$ as $j\to\infty$.
\item For every $j$ and $\mu$-a.e.\ $x\in X$ the sequence $(f_{j}(T^{n}x))_{n}$ is an $l$-step nilsequence.
\end{enumerate}
Using the first condition we can pass to a subsequence such that $\|\tilde f-f_{j}\|_{L^{2^{l-1}}(X,\mu_{x})} \to 0$ for a.e.\ $x\in X$.
Thus we obtain a full measure subset $X'\subset X$ such that the following holds for every $x\in X'$:
\begin{enumerate}
\item for every $j$ the sequence $(f_{j}(T^{n}x))_{n}$ is an $l$-step nilsequence,
\item for every $j$ the point $x$ is fully generic for $f-f_{j}$ with respect to an ergodic measure $\mu_{x}$ and
\item $\|f-f_{j}\|_{U^{l}(X,\mu_{x})} \to 0$ as $j\to\infty$ (this follows from the basic inequality \eqref{eq:est-U-by-L}).
\end{enumerate}
Let $x\in X'$ and $(a_{n})$ be a basic $l$-step nilsequence of the form $a_{n}=F(g(n)\Gamma)$ with smooth $F$.
Since the product of two nilsequences is again a nilsequence, by Leibman
\cite{MR2122919}*{Theorem A} the limit
\[
\lim_{N\to\infty} \aveFn f_j(T^n x) F(g(n)\Gamma)
\]
exists for every $j\in\N$.
By Theorem~\ref{thm:main} we have
\[
\limsup_{N\to\infty} \Big| \aveFn (f-f_{j})(T^{n}x)F(g(n)\Gamma) \Big| \lesssim \|f-f_{j}\|_{U^{l}(X,\mu_{x})}
\]
for every $j$, where the constant does not depend on $j$, and this implies the existence of the limit \eqref{ave}.

Let now $x\in X'$ and $(a_{n})$ be a basic generalized nilsequence of the form $a_n=F(g(n)\Gamma)$ with a real valued Riemann integrable function $F$.
Let $\veps>0$.
Since $F$ is Riemann integrable on $\tilde Y=\ol{\{g(n)\Gamma : n\in\Z\}}$ (which is a finite union of sub-nilmanifolds with the weighted Haar measure $\nu$) and by the Tietze extension theorem, there exist continuous functions $F_\veps$ and $H_\veps$ on $G/\Gamma$ with $F_\veps\leq F\leq H_\veps$ such that $\int (H_\veps - F_\veps) \dif\nu < \veps$.
By mollification we may assume that $H_\veps$ and $F_\veps$ are smooth.
By the above the limits $\lim_{N} \aveFn f(T^{n}x) H_{\veps}(g(n)\Gamma)$ and $\lim_{N} \aveFn f(T^{n}x) F_{\veps}(g(n)\Gamma)$ exist.
By continuity of $F_\veps$ and $H_\veps$ we have for every $x\in X'$
\begin{multline*}
\left(\limsup_{N\to\infty}- \liminf_{N\to\infty}\right)\aveFn f(T^nx) F(g(n)\Gamma)\\
\leq \lim_{N\to\infty} \aveFn f(T^nx) (H_\veps - F_\veps)(g(n)\Gamma)\\
\leq \lim_{N\to\infty} \aveFn (H_\veps - F_\veps)(g(n)\Gamma)
=  \int_{\tilde Y} (H_\veps - F_\veps) \dif\nu <  \veps,
\end{multline*}
and since $\veps>0$ was arbitrary this proves the existence of the limit \eqref{ave}.

A limiting argument allows one to replace the basic generalized nilsequence by a generalized nilsequence.
By linearity we obtain the conclusion for $f\in L^{\infty}(X)$.
The general case $f\in L^{1}(X)$ follows from the maximal inequality \eqref{eq:maximal-inequality}.

Under the additional assumptions of unique ergodicity of $(X,T)$ and continuity of the projection $\pi:X\to\HKZ_{l}(X)$ we find that the functions $f_{j}$ can be chosen to be continuous on $X$ by \cite{MR2600993}*{Theorem A} and every point is fully generic for $f-f_{j}$, allowing us to replace the set of full measure $X'$ in the above argument by $X$.
\end{proof}

\section{Weighted multiple averages}\label{sec:multiple}
The Wiener-Wintner theorem (Theorem~\ref{thm:WW-gen-nilseq} for linear nilsequences) was used by Host and Kra \cite{MR2544760}*{Theorem 2.25} to show that the values of a bounded measurable function along almost every orbit of an ergodic transformation are good weights for $L^{2}$ convergence of linear multiple ergodic averages.
A polynomial extension of this result was proved by Chu \cite{MR2465660}*{Theorem 1.1}.
Since our Theorem~\ref{thm:WW-gen-nilseq} is stated for ``polynomial'' nilsequences we can slightly shorten the proof of her result that we formulate for $L^{1}$ functions and tempered F\o{}lner sequences.

\begin{corollary}[Convergence of weighted multiple ergodic averages]
\label{cor:weighted-multiple-convergence}
Let $(\Phi_N)$ be as above and let $\phi \in L^1(X)$.
Then there is a set $X'\subset X$ of full measure such that for every $x\in X'$ the sequence $\phi(T^n x)$ is a \emph{good weight for polynomial multiple ergodic averages along $(\Phi_N)$}, i.e., for every measure-preserving system $(Y,\nu, S)$, integer polynomials $p_{1},\dots,p_{k}$ and functions $f_1,\ldots,f_k\in L^\infty(Y,\nu)$ the averages
\begin{equation}
\label{eq:weighted-multiple}
\aveFn \phi(T^nx) S^{p_{1}(n)} f_1\cdots S^{p_{k}(n)} f_k
\end{equation}
converge in $L^2(Y,\nu)$ as $N\to \infty$.
\end{corollary}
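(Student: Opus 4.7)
The plan is to follow the Host--Kra/Chu scheme and deduce the corollary from Theorem~\ref{thm:WW-gen-nilseq}. First I would reduce to bounded $\phi$: using the Lindenstrauss maximal inequality \eqref{eq:maximal-inequality} one sees that for fixed $(Y,\nu,S,p_{j},f_{j})$ the $L^{2}(Y,\nu)$-norm of the difference of the averages \eqref{eq:weighted-multiple} formed with two functions $\phi$ and $\phi'$ is pointwise (in $x$) dominated by $\prod_{j}\|f_{j}\|_{\infty}\cdot M|\phi-\phi'|(x)$, whose distribution is controlled by \eqref{eq:maximal-inequality}. A Banach-principle/Cauchy argument then shows that the set of good $\phi$ is closed in $L^{1}(X)$, so it suffices to treat $\phi\in L^{\infty}(X)$.

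Second, I would reduce $(Y,\nu,S)$ to a nilsystem and the $f_{j}$ to continuous functions. The characteristic factor theorem of Host--Kra and Leibman produces $L=L(p_{1},\dots,p_{k})$ such that the \emph{unweighted} averages $\aveFn S^{p_{1}(n)}f_{1}\cdots S^{p_{k}(n)}f_{k}$ vanish in $L^{2}(Y,\nu)$ whenever one $f_{j}$ projects to $0$ on $\HKZ_{L}(Y)$. A PET/van der Corput argument using Lemma~\ref{VdC} and the uniform bound $\|\phi(T^{n}\cdot)\|_{\infty}\leq\|\phi\|_{\infty}$ transports this conclusion to the weighted setting, so I may assume each $f_{j}$ is $\HKZ_{L}(Y)$-measurable. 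Since $\HKZ_{L}(Y)$ is an inverse limit of $L$-step nilsystems and the weighted averages are uniformly bounded in $L^{\infty}$, a further Cauchy-in-$L^{2}$ approximation lets me take $Y=G/\Gamma$ to be a nilmanifold, $S y=a y$ for some $a\in G$, and each $f_{j}\in C(G/\Gamma)$.

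Third comes the main observation: the product $S^{p_{1}(n)}f_{1}(y)\cdots S^{p_{k}(n)}f_{k}(y)$ is, for each fixed $y$, a basic $L$-step nilsequence drawn from a nilmanifold independent of $y$. Indeed, set $\tilde G:=G^{k}$, $\tilde\Gamma:=\Gamma^{k}$ with the product lower central filtration, which is automatically $\tilde\Gamma$-rational; the sequence $h(n):=(a^{p_{1}(n)},\dots,a^{p_{k}(n)})$ is $\Z$-polynomial in $\tilde G$, and for every $y\in G/\Gamma$ with a lift $\tilde y$ representing the diagonal point of $\tilde G/\tilde\Gamma$, the shifted sequence $g_{y}(n):=h(n)\tilde y$ is still polynomial. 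Taking $F(y_{1},\dots,y_{k}):=f_{1}(y_{1}\Gamma)\cdots f_{k}(y_{k}\Gamma)\in C(\tilde G/\tilde\Gamma)$ gives
\[
S^{p_{1}(n)}f_{1}(y)\cdots S^{p_{k}(n)}f_{k}(y)=F(g_{y}(n)\tilde\Gamma).
\]
Theorem~\ref{thm:WW-gen-nilseq} applied to $\phi$ now furnishes a full-measure set $X'\subset X$, depending only on $\phi$, such that for every $x\in X'$ and every generalized nilsequence $(b_{n})$ the average $\aveFn \phi(T^{n}x)b_{n}$ converges; in particular it converges pointwise in $y$ for $b_{n}=F(g_{y}(n)\tilde\Gamma)$. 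Since these averages are uniformly bounded by $\|\phi\|_{\infty}\prod_{j}\|f_{j}\|_{\infty}$, the dominated convergence theorem upgrades pointwise-in-$y$ convergence to convergence in $L^{2}(G/\Gamma)$, hence in $L^{2}(Y,\nu)$ after unwinding the reductions.

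The technically most delicate step is the second one: the characteristic-factor reduction of Host--Kra/Leibman is classical for unweighted averages, but carrying it through with the weight $\phi(T^{n}x)$ uniformly in $x$ requires a PET-type induction built on Lemma~\ref{VdC} rather than a direct appeal. Once this is in place the rest of the argument is essentially cost-free: the key feature of Theorem~\ref{thm:WW-gen-nilseq} is that the full-measure set $X'$ it produces handles \emph{every} generalized nilsequence simultaneously, so a single $X'$ serves all the parameters $(Y,\nu,S,p_{j},f_{j})$ ranged over in the corollary.
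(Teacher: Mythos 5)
Your proposal has the same architecture as the paper's proof: reduce to bounded $\phi$ via the maximal inequality \eqref{eq:maximal-inequality}, reduce the $f_j$ to a Host--Kra factor of $Y$ by a characteristic-factor argument, observe that $\prod_j S^{p_j(n)}f_j(y)$ is then a basic nilsequence for each $y$, apply Theorem~\ref{thm:WW-gen-nilseq} to get a single full-measure set $X'$ working for all parameters at once, and finish with dominated convergence. The one place where you diverge -- and where you leave a sketch rather than an argument -- is the step you correctly identify as delicate, namely transporting the characteristic-factor statement to the weighted setting. You propose a weighted PET induction ``uniform in $x$''; the paper instead proves Lemma~\ref{lem:weighted-multiple-zero}, which is stated for an \emph{arbitrary} bounded weight sequence $(a_n)$ (so no uniformity in $x$ is ever needed: one just plugs in $a_n=\phi(T^nx)$ for each fixed $x$), and which disposes of the weight in a single stroke: one application of van der Corput in Leibman's F\o{}lner-in-$\Z^3$ form followed by Cauchy--Schwarz eliminates $(a_n)$ completely and leaves an \emph{unweighted} polynomial multiple average over the product system $X\times X$; Leibman's convergence theorem applied to the ergodic components $(\mu\times\mu)_s$, together with the Host--Kra identity $\|f\|_{U^k(X)}^{2^k}=\int_s\|f\otimes\bar f\|_{U^{k-1}(X\times X,(\mu\times\mu)_s)}^{2^{k-1}}\,\dif s$, then gives the vanishing. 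This is substantially lighter than a weighted PET induction and you should adopt it. Two further small points: you need to take the ergodic decomposition of $(Y,\nu,S)$ at the outset (the factors $\HKZ_L(Y)$ and the nilsystem approximation are defined for ergodic systems), and your claim that $n\mapsto a^{p(n)}$ is polynomial for the \emph{lower central} filtration of $G^k$ is not right when $\deg p_j>1$ -- one must use a longer filtration (of length about $l\max_j\deg p_j$, with groups repeated), exactly as the paper does when it records that the product is a basic nilsequence of step at most $l\max_i\deg p_i$.
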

In order to reduce to an appropriate nilfactor we need the following variant of \cite{MR2465660}*{Theorem 2.2}.
Recall that two polynomials are called \emph{essentially distinct} if their difference is not constant.
\begin{lemma}
\label{lem:weighted-multiple-zero}
Let $(\Phi_{N})_{N}$ be an arbitrary F\o{}lner sequence in $\Z$.
For every $r,d\in\N$ there exists $k\in\N$
such that for every ergodic system $(X,\mu,T)$, any functions $f_{1},\dots,f_{r} \in L^{\infty}(X)$ with $\|f_{1}\|_{U^{k}(X)}=0$,
any non-constant pairwise essentially distinct integer polynomials $p_{1},\dots,p_{r}$ of degree at most $d$ and any bounded sequence of complex numbers $(a_{n})_{n}$ we have
\[
\limsup_{N\to\infty} \Big\| \aveFn a_{n} T^{p_{1}(n)}f_{1} \cdots T^{p_{r}(n)}f_{r} \Big\|_{L^{2}(X)} = 0.
\]
\end{lemma}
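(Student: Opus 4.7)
The plan is to prove this by Bergelson's PET (polynomial exhaustion technique) induction, using repeated application of the quantitative van der Corput estimate (Lemma~\ref{VdC}) to reduce to a single-function, single-polynomial average that is directly controlled by a Gowers-Host-Kra seminorm of $f_1$. Concretely, let $u_n := a_n T^{p_1(n)}f_1 \cdots T^{p_r(n)}f_r$, viewed as elements of $L^2(X)$. Lemma~\ref{VdC} bounds $\|\aveFn u_n\|_2^2$ by $\frac{2}{K^2}\sum_{|h|\le K}(K-|h|)\,\aveFn \langle u_n,u_{n+h}\rangle$ plus a $C^2 o_K(1)$ error. Expanding $\langle u_n,u_{n+h}\rangle$, applying the measure-preserving change of variables $T^{-p_1(n)}$, and using the bound $|a_n\overline{a_{n+h}}| \le \|a\|_\infty^2$, one sees that the inner average is controlled by a new multiple average of length $2r-1$ whose polynomial family consists of $p_1(n+h)-p_1(n)$ and $p_i(n) - p_1(n)$, $p_i(n+h) - p_1(n)$ for $i\ge 2$, weighted by the new bounded sequence $a_n\overline{a_{n+h}}$, and with one of the original functions replaced by $f_1\overline{f_1}$-type differences.

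Step two is to set up a complexity invariant on the data $((p_i)_i,r,d)$ in the spirit of Bergelson--Leibman, and verify that for all $h$ outside a finite exceptional set the new polynomial family (i) still has degree at most $d$, (ii) is still pairwise essentially distinct, and (iii) has strictly smaller complexity than the original. By finite induction on this complexity, after a bounded number $k_0 = k_0(r,d)$ of van der Corput applications one reaches a trivial system — a single term of the form $\aveFn b_n T^{q(n)} f_1$ with $q$ a non-constant polynomial of degree $\le d$ and $b_n$ a bounded sequence obtained from iterated products and shifts of $(a_n)$. For this base case, a further sequence of van der Corput steps together with the smoothed formula \eqref{eq:uniformity-seminorms-smoothed} produces a bound of the form $C\|f_1\|_{U^k(X)}^c$ for some $k = k(r,d)$ and $c>0$ depending only on $r,d$. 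Since the assumption $\|f_1\|_{U^k(X)} = 0$ then forces the full limsup to vanish, the lemma follows.

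The main obstacle is the PET bookkeeping: one has to (i) define the complexity measure so that the generic van der Corput step strictly decreases it (handling the finitely many bad $h$ for which new polynomials become essentially equal by noting that their contribution to the average in $h$ vanishes in the $K\to\infty$ limit); (ii) ensure that across all reductions the distinguished function remains $f_1$, so the final seminorm estimate is in terms of $\|f_1\|_{U^k(X)}$ rather than $\|f_j\|_{U^k(X)}$ for some other $j$; and (iii) manage the repeatedly-introduced bounded weights $b_n = a_n\overline{a_{n+h_1}}\cdots$, which is straightforward since the essential observation is that each van der Corput step leaves them bounded uniformly in all parameters, and only their uniform bound — not their structure — enters the final estimate. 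None of these issues is fundamentally new; they are handled by the same arguments used in Chu \cite{MR2465660} and ultimately go back to Bergelson's original PET argument for polynomial recurrence.
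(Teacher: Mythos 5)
Your strategy---a full PET induction that carries the weights $(a_n)$ through every van der Corput step until a single term controlled by a Gowers--Host--Kra seminorm of $f_1$ remains---is viable, but it is genuinely different from, and considerably heavier than, what the paper does. The paper spends exactly \emph{one} van der Corput step (in Leibman's form \cite{MR2151605}*{Lemma 4}, producing an average over a F\o{}lner sequence in $\Z^{3}$) for the sole purpose of eliminating the weights: after bounding $|a_{n+v}\overline{a_{n+w}}|\leq 1$ the weights are gone, and a single Cauchy--Schwarz converts the resulting average of integrals into an \emph{unweighted} polynomial multiple ergodic average on the product system $(X\times X,(\mu\times\mu)_{s})$ for each ergodic component $s$. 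At that point the paper simply cites Leibman's characteristic-factor theorem \cite{MR2151605}*{Theorem 3} and transfers the hypothesis on $f_{1}$ via the identity $\|f_{1}\|_{U^{k}(X)}^{2^{k}}=\int_{s}\|f_{1}\otimes\overline{f_{1}}\|_{U^{k-1}(X\times X,(\mu\times\mu)_{s})}^{2^{k-1}}\,\dif s$. Your route instead re-proves a weighted version of Leibman's theorem from scratch. The paper's approach buys brevity and the freedom to quote the unweighted theory as a black box, at the cost of passing to $X\times X$, its ergodic decomposition and Fatou's lemma; yours buys self-containedness and avoids the product system, at the cost of redoing the entire PET analysis with weights attached.

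One caution: the thinnest point of your outline is your item (ii). The naive PET scheme differences with respect to a polynomial of minimal degree and sacrifices the attached function via Cauchy--Schwarz, so without further care the seminorm that survives is that of a function attached to a polynomial of \emph{maximal} degree---not necessarily $f_{1}$ (consider $p_{1}(n)=n$, $p_{2}(n)=n^{2}$). Arranging the induction so that an arbitrarily designated function is never the one discarded is precisely the content of the theorem the paper cites, and it constitutes the bulk of the work in your approach; it is available in the literature you point to, but it is not mere bookkeeping and should not be presented as such.
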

\begin{proof}
We may assume that $(a_{n})$ is bounded by $1$.
By a variant of the van der Corput lemma \cite{MR2151605}*{Lemma 4} there exists a F\o{}lner sequence $(\Theta_{M})$ in $\Z^{3}$ such that the square of the left-hand side is bounded by
\begin{multline*}
\limsup_{M} \frac{1}{|\Theta_{M}|} \Big| \sum_{(n,v,w)\in\Theta_{M}} a_{n+v}\ol{a_{n+w}} \int_{X} \prod_{i=1}^{r} T^{p_{i}(n+v)}f_{i} T^{p_{i}(n+w)}\ol{f_{i}} \Big|\\
\leq
\limsup_{M} \frac{1}{|\Theta_{M}|} \sum_{(n,v,w)\in\Theta_{M}} \Big| \int_{X} \prod_{i=1}^{r} T^{p_{i}(n+v)}f_{i} T^{p_{i}(n+w)}\ol{f_{i}} \Big|.
\end{multline*}
By the Cauchy-Schwarz inequality the square of this expression is bounded by
\begin{multline*}
\limsup_{M} \frac{1}{|\Theta_{M}|} \sum_{(n,v,w)\in\Theta_{M}} \Big| \int_{X} \prod_{i=1}^{r} T^{p_{i}(n+v)}f_{i} T^{p_{i}(n+w)}\ol{f_{i}} \Big|^{2}\\
=
\limsup_{M} \frac{1}{|\Theta_{M}|} \sum_{(n,v,w)\in\Theta_{M}} \int_{X\times X} \prod_{i=1}^{r} (T\times T)^{p_{i}(n+v)}(f_{i}\otimes \ol{f_{i}}) (T\times T)^{p_{i}(n+w)}(\ol{f_{i}}\otimes f_{i}).
\end{multline*}
Let $\mu\times\mu = \int_{s\in Z} (\mu\times\mu)_{s} \dif s$ be the ergodic
decomposition of $\mu\times\mu$.
By Fatou's lemma the above expression is bounded by
\begin{multline*}
\int_{s\in Z}\limsup_{M} \frac{1}{|\Theta_{M}|} \sum_{(n,v,w)\in\Theta_{M}} \int_{X\times X} \prod_{i=1}^{r} (T\times T)^{p_{i}(n+v)}(f_{i}\otimes \ol{f_{i}}) (T\times T)^{p_{i}(n+w)}(\ol{f_{i}}\otimes f_{i}) \dif(\mu\times\mu)_{s}\, \dif s\\
\leq
\int_{s\in Z}\limsup_{M} \Big\| \frac{1}{|\Theta_{M}|} \sum_{(n,v,w)\in\Theta_{M}} \prod_{i=1}^{r} (T\times T)^{p_{i}(n+v)}(f_{i}\otimes \ol{f_{i}}) (T\times T)^{p_{i}(n+w)}(\ol{f_{i}}\otimes f_{i}) \Big\|_{L^{1}(X\times X,(\mu\times\mu)_{s})}\, \dif s.
\end{multline*}
Convergence to zero of the integrand follows from Leibman \cite{MR2151605}*{Theorem 3} provided that $\|f_{1} \otimes \ol{f_{1}}\|_{U^{k-1}(X\times X,(\mu\times\mu)_{s})}=0$ for some sufficiently large $k$.
It follows from Host, Kra \cite{MR2150389}*{Lemma 3.1} and the original definition of the uniformity seminorms in \cite{MR2150389}*{\textsection 3.5} that
\[
\|f_{1}\|_{U^{k}(X)}^{2^{k}} = \int_{s\in Z} \|f_{1} \otimes \ol{f_{1}}\|_{U^{k-1}(X\times X,(\mu\times\mu)_{s})}^{2^{k-1}}\, \dif s.
\]
Thus the hypothesis ensures convergence to zero of the integrand in the previous display for a.e.\ $s$ provided that $k$ is large enough.
\end{proof}
\begin{proof}[Proof of Corollary~\ref{cor:weighted-multiple-convergence}]
By ergodic decomposition it suffices to consider ergodic systems $(Y,\nu,S)$.

Assume first that $\phi\in L^{\infty}(X)$.
By Lemma~\ref{lem:weighted-multiple-zero} we may assume that each $f_{i}$ is measurable with respect to some Host-Kra factor $\HKZ_{l}(Y)$.

By density we may further assume that each $f_{i}$ is a continuous function on a nilsystem factor of $Y$.
In this case the sequence $S^{p_{i}(n)}f_{i}(y)$ is a basic nilsequence of step at most $l \deg p_{i}$ for each $y\in Y$, and the product $\prod_{i} S^{p_{i}(n)}f_{i}(y)$ is also a basic nilsequence of step at most $l \max_{i}\deg p_{i}$.
Therefore the averages \eqref{eq:weighted-multiple} converge pointwise on $Y$ for a.e.\ $x\in X$ by Theorem~\ref{thm:WW-gen-nilseq}, and by the Dominated Convergence Theorem they converge in $L^{2}(Y)$.

We can finally pass to $\phi\in L^{1}(X)$ using the maximal inequality \eqref{eq:maximal-inequality}.
\end{proof}

\bibliography{pzorin-ergodic-MR,pzorin-ergodic-preprints}
\end{document}